\tikzset{
commutative diagrams/.cd,
arrow style=tikz,
diagrams={>=latex}}
\tikzset{
commutative diagrams/.cd,
arrow style=tikz,
diagrams={>=latex}}
\newcommand{\Mpres}[2]{\operatorname{Mon}\bigl\langle #1\:|\:#2 \bigr\rangle}
\newcommand{\Gpres}[2]{\operatorname{Gp}\bigl\langle #1\:|\:#2 \bigr\rangle}
\newcommand{\rr}[1]{\operatorname{}\langle #1 \rangle}
\newcommand{\N}{\mathbb{N}}
\newcommand{\Z}{\mathbb{Z}}
\newcommand{\G}{G_{\Gamma}}
\newcommand{\ga}{\Gamma}
\theoremstyle{plain}
\theoremstyle{definition}
\newtheorem*{theorem*}{Theorem}
\newtheorem{theorem}{Theorem}[section]
\newtheorem{prop}[theorem]{Proposition}
\newtheorem{lemma}[theorem]{Lemma}
\newtheorem{cor}[theorem]{Corollary}
\newtheorem{remark}[theorem]{Remark}
\newtheorem{notation}[theorem]{Notation}
\theoremstyle{definition}
\newtheorem{mydef}[theorem]{Definition}
\newtheorem{example}[theorem]{Example}
\author{}
\DeclareMathOperator{\Link}{Lk}
\DeclareMathOperator{\pref}{pref}
\DeclareMathOperator{\supp
}{Supp}
\newcommand{\verteq}{\rotatebox{90}{$\,=$}}
\title{Twisted right-angled Artin groups}
\author{Islam Foniqi}
\date{}
\let\runauthor\@author
\let\runtitle\@title
\begin{document}
\maketitle
\begin{abstract}

We present a complete rewriting system for twisted right-angled Artin groups. Utilizing the normal form coming from the rewriting system, we provide applications that illustrate differences and similarities with right-angled Artin groups, both at the geometric and algebraic levels.
\end{abstract}
	
\renewcommand{\thefootnote}{\fnsymbol{footnote}} 
\footnotetext{
\noindent\emph{MSC 2020 classification:} 05A15, 05C25, 20F36. \newline
\emph{Key words:} Twisted right-angled Artin groups, growth functions, growth series, Cayley graphs}

\section{Introduction}

The subject of this article is the class of twisted right-angled Artin groups (tRAAGs shortly),
whose presentation is given by generators and relations: there are finitely many generators and there is at most one relation between any two distinct generators $a,b$; this relation can be a commutation~$ab = ba$, or a so-called {\it Klein relation} $aba = b$. \\
As basic and important examples of tRAAGs, we have both the fundamental groups of a torus:~$\Z^2 = \Gpres{a, b}{ab = ba}$, and of the Klein bottle:~$K = \Gpres{a, b}{aba = b}$.

The class of tRAAGs generalizes the class of right-angled Artin groups (RAAGs) which plays an important role in geometric group theory (see \cite{charney2007introduction} for a survey).

The class of tRAAGs appears in \cite{clancy2010homology}, where the notion of a \emph{twisted Artin group} was
introduced. This current article provides a generalization of Chapter 3 of the author's PhD thesis~\cite{foniqi2022}. Some instances of tRAAGs appear also in \cite{phdthesis} and \cite{blumer2023oriented}, where the name \emph{oriented} is used instead of twisted.
Recently in \cite{cassella2024hypercubical} the geometry of tRAAGs was studied, and it was shown that tRAAGs are hypercubical groups.

One way to give presentations of groups appearing here, is using graphs; the vertices of the graph represent the generators of the group, while edges encode the relations. 

\begin{mydef}\label{def_simplicial_graph}
A {\it simplicial graph $\Gamma$} is a pair $\Gamma = (V, E)$, where $V = V\ga$ is a set whose elements are called vertices, and~$E = E\ga \subseteq \{\{x,y\} \mid x,y \in V, x\neq y\}$ is a set of paired distinct vertices, whose elements are called edges.
\end{mydef}
Note that in simplicial graphs there are no multiple edges and no loops. Usually, the
presentation of RAAGs is encoded in such graphs, where an edge connecting generators~$a$ and~$b$ gives rise to the commutation~$ab = ba$.

To define tRAAGs we use mixed graphs, which are similar to simplicial graphs but directed edges are allowed.
We define mixed graphs using simplicial graphs as an underlying structure.



\begin{mydef}\label{def_mixed_graph}
A {\it mixed graph} $\Gamma$ consists of an {\it underlying simplicial graph}~$(V, E)$, a set of {\it directed edges} $D \subseteq E$, and two maps $o, t \colon D \longrightarrow V$. We denote it as $\Gamma = (V, E, D, o, t)$. \\
For an edge $e=\{x,y\} \in D$, the maps $o, t$ satisfy $o(e), t(e) \in \{x,y\}$, and $o(e) \neq t(e)$. Refer to~$o(e)$ and $t(e)$ as the {\it origin} and the {\it terminus} of edge $e$ respectively.
\end{mydef}

\begin{notation} 
Let $\Gamma = (V, E, D, o, t)$ be a mixed graph.
\begin{itemize}[itemsep=4pt,parsep=0pt,topsep=0pt, partopsep=0pt]
    \item[(i)] If $e = \{a,b\}$ is an undirected edge, i.e. $e \in E \setminus D$, we write $e = [a, b]$; note that also~$e = [b, a]$.
    \item[(ii)] Instead, if $e = \{a,b\}$ is a directed edge, i.e. $e \in D$, we write $e = [o(e), t(e)\rangle$. In this case, either $e = [a, b\rangle$, or $e = [b, a\rangle$.
\end{itemize}
Geometrically, we present the respective edges $[a,b]$, and $[a,b\rangle$ as follows:
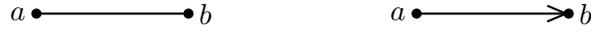
\begin{figure}[H]
\centering
\begin{tikzpicture}[>={Straight Barb[length=7pt,width=6pt]},thick]

\draw[fill=black] (0,0) circle (1.5pt) node[left] {$a$};
\draw[fill=black] (2,0) circle (1.5pt) node[right] {$b$};
\draw[fill=black] (5,0) circle (1.5pt) node[left] {$a$};
\draw[fill=black] (7,0) circle (1.5pt) node[right] {$b$};
\draw[thick] (0,0) -- (2,0);
\draw[thick, ->] (5,0) --  (7,0);
\end{tikzpicture}\caption{Types of edges in $E\Gamma$}\label{types_of_edges}
\end{figure}
Since every edge in $E = E\Gamma$ has exactly one of the types appearing in Figure \ref{types_of_edges}, one can express the set of edges in $\Gamma$ as 
$$E\Gamma = \overline{E\Gamma} \sqcup \overrightarrow{E\Gamma},$$
where $\overline{E\Gamma} = E \setminus D$ consists of undirected edges, and $\overrightarrow{E\Gamma} = D$ consists of directed edges.
\end{notation}
When dealing with group presentations encoded from graphs, we want edge $[a,b]$ to represent the commutation of $a,b$; and edge $[a,b\rangle$ to represent the corresponding Klein relation; hence by a slight abuse of notation, we also set: $[a,b] = aba^{-1}b^{-1}$, and~$[a,b\rangle = abab^{-1}$.

\begin{mydef}\label{def_of_traags}
Let $\Gamma = (V,E)$ be a mixed graph with $E = \overline{E\Gamma} \sqcup \overrightarrow{E\Gamma}$. Define a group~$G_{\Gamma}$, corresponding to $\Gamma$ as
$$G_{\Gamma} = \langle  V \mid ab = ba \text{ if $[a,b]\in \overline{E\Gamma}$ }, aba = b \text{ if $[a,b\rangle \in \overrightarrow{E\Gamma}$ }\rangle.$$
We call $G_{\Gamma}$ the {\it twisted right-angled Artin group} based on $\Gamma$, and we call $\Gamma$ {\it the defining graph} of $G_{\Gamma}$. If $\overrightarrow{E\Gamma} = \emptyset$ (equivalently if $E\Gamma = \overline{E\Gamma}$), $G_{\Gamma}$ is a {\it right-angled Artin group}.
\end{mydef}


In Section \ref{sec: overview_rewriting_systems}, we review the necessary preliminaries on rewriting systems. Then in Section \ref{sec: rewriting_systems_tRAAGs} we provide a complete rewriting system for tRAAGs. In Section \ref{sec: normal_form} we provide a normal form theorem for tRAAGs, and we consider some of its applications in Section \ref{applications_normal_form}.

\section{An overview of rewriting systems}\label{sec: overview_rewriting_systems}
For this section we follow Chapter 12 of \cite{holt2005handbook}, where the reader can find more details.

Let $A$ be a finite set, which we call {\it alphabet}. Denote by $A^{\ast}$ the {\it free monoid} over $A$. Elements of $A^{\ast}$ are finite sequences which we call {\it words}. Usually we denote the empty word by $1$, which serves as the identity of the monoid.

\begin{mydef}
By a {\it rewriting system} on~$A^{\ast}$ we mean a set~$\mathcal{S}$ of ordered pairs~$(l, r)$ where~$l, r$ are words in~$A^{\ast}$.
\end{mydef}

So, $\mathcal{S} \subseteq A^{\ast} \times A^{\ast}$;  elements of $\mathcal{S}$ are called rewrite-rules, $l$ is the left-hand side, and~$r$ is the right-hand side of the rule $(l, r)$.
In words over $A$, one can replace occurrences of $l$ by $r$. Additionally we assume that no two distinct rules have the same left-hand sides.

For $u, v \in A^{\ast}$ we write $u \rightarrow_{_\mathcal{S}} v$, and say that the word $u$ reduces to the word $v$, if there are $p,q \in A^{\ast}$ such that $u = plq$ and $v = prq$ with $(l, r) \in \mathcal{S}$; i.e. $v$ is obtained from $u$ from a single application of a rewrite-rule. If there is no ambiguity, we omit the subscript $\mathcal{S}$.

We say that~$u \in A^{\ast}$ is {\it $\mathcal{S}$-irreducible} (or {\it $\mathcal{S}$-reduced}) if there are no strings~$v \in A^{\ast}$ satisfying~$u \rightarrow v$. Again, we can omit  $\mathcal{S}$, and talk about irreducible (reduced) words.

\begin{example}\label{ex: rewriting_ba_ab}
Let $A = \{a, b\}$, and $\mathcal{S} = \{(ba, ab)\}$. One has the following reductions:
\[
\underline{ba}ba \rightarrow \underline{ab}ba,\quad 
ab\underline{ba} \rightarrow ab\underline{ab},\quad
a\underline{ba}b \rightarrow a\underline{ab}b
\]
Note that $aabb$ is reduced, as we cannot apply any rewrite-rules.
\end{example}

Define $\rightarrow^{\ast}$ to be the reflexive, transitive closure of $\rightarrow$. So, for $u, v \in A^{\ast}$ writing $u \rightarrow^{\ast} v$ means that there is a sequence of words $u_0, \ldots, u_n$ in $A^{\ast}$ with
\[
u=u_0 \rightarrow u_1 \rightarrow \dots \rightarrow u_{n-1} \rightarrow u_n = v.
\]
The rewriting system $\mathcal{S}$ is called {\it Noetherian} (or {\it terminating}) if there is no infinite chain of words~$u_i \, (i \geq 0)$ with $u_i \rightarrow u_{i+1}$ for all $i$. In terminating systems, for any $u \in A^{\ast}$, there is an irreducible word $v \in A^{\ast}$ such that $u \rightarrow^{\ast} v$.
The system in Example \ref{ex: rewriting_ba_ab} is Noetherian as words get smaller alphabetically while preserving the length.

Say that a rewriting system $\mathcal{S}$ is {\it confluent} if any two words with a common ancestor have a common descendent; i.e. whenever $u \rightarrow^{\ast} v_1$, and  $u \rightarrow^{\ast} v_2$ with $u, v_1, v_2 \in A^{\ast}$, there exists~$w \in A^{\ast}$ such that $v_1 \rightarrow^{\ast} w$ and $v_1 \rightarrow^{\ast} w$; expressed diagrammatically as:
\begin{center}
\begin{tikzpicture}[>={Straight Barb[length=6pt,width=5pt]}, inner sep=2pt]
    \node at (0,0) (u) {$u$}; 
    \node at (2,0.75) (v1) {$\, v_1\,$}; 
    \node at (2,-0.75) (v2) {$\, v_2\,$}; 
    \node at (4,0) (w) {$\quad w \quad$}; 
    
    \path[->] (u) edge node[above, pos=1] {$_*$} (v1);
 	\path[->] (u) edge node[above, pos=1] {$_*$} (v2); 
 	\path[->, dashed] (v1) edge node[above, pos=1] {$_*$} (w);
 	\path[->, dashed] (v2) edge node[above, pos=1] {$_*$} (w);
\end{tikzpicture}
\end{center}
In confluent systems, any word $w \in A^{\ast}$ has at most one irreducible descendent.

A system $\mathcal{S}$ is called {\it complete} if it is both Noetherian and confluent. In complete systems, for any word $w \in A^{\ast}$ there exists exactly one irreducible descendent
$z$ of $w$ which we call the {\it normal form} of $w$. 

A system $\mathcal{S}$ is called {\it locally confluent}  if for all $u \in A^{\ast}$ with $u \rightarrow v_1$ and  $u \rightarrow v_2$ for some~$v_1, v_2 \in A^{\ast}$, there exists~$w \in A^{\ast}$ such that $v_1 \rightarrow^{\ast} w$ and $v_2 \rightarrow^{\ast} w$; expressed by the following diagram:
\begin{center}
\begin{tikzpicture}[>={Straight Barb[length=6pt,width=5pt]}, inner sep=2pt]
    \node at (0,0) (u) {$u$}; 
    \node at (2,0.75) (v1) {$v_1\,$}; 
    \node at (2,-0.75) (v2) {$v_2\,$}; 
    \node at (4,0) (w) {$\quad w \quad$}; 
    
    \path[->] (u) edge  (v1);
 	\path[->] (u) edge  (v2);
 	\path[->, dashed] (v1) edge node[above, pos=1] {$_*$} (w);
 	\path[->, dashed] (v2) edge node[above, pos=1] {$_*$} (w);
\end{tikzpicture}
\end{center}
System $\mathcal{S}$ is  complete if and only if it is terminating and locally confluent (Lemma 12.15 \cite{holt2005handbook}).

The symmetric closure of $\rightarrow^{\ast}$ is denoted by $\leftrightarrow^{\ast}$. 
If $\mathcal{S}$ is complete then each equivalence class under $\leftrightarrow^{\ast}$ contains a unique irreducible element (Lemma 12.16 \cite{holt2005handbook}), i.e. equivalent words have common descendants. The system in Example \ref{ex: rewriting_ba_ab} is complete with set of normal forms~$\mathcal{N} = \{a^ib^j|i, j \in \N_0\}$.

Determining whether a rewriting system is terminating is undecidable, hence completeness is difficult to tackle. Knuth and Bendix provided an algorithm which takes as input a finite terminating system~$\mathcal{S}$, tests whether it is complete, and if it is not, then new rules are added to complete it. 
The algorithm tests local confluence, as for Noetherian systems local confluence is equivalent to confluence.

\begin{mydef}
Two rewriting systems $\mathcal{S}$ and $\mathcal{S}'$ on $A^\ast$ are called {\it equivalent} if $u \leftrightarrow^{\ast}_{\mathcal{S}} v$ if and only if $u \leftrightarrow^{\ast}_{\mathcal{S'}} v$ for $u, v \in A^\ast$.
\end{mydef}

\begin{remark}\label{rem: len-lex ordering}
In rewriting systems appearing in this article, any rule $(l, r)$ satisfies the len-lex order, i.e. one of the following holds:
\begin{itemize}[topsep=3pt, partopsep=0pt, itemsep = 0pt]
\item $l$ has shorter length than $r$, or
\item $l$ has the same length as $r$ but $r$ is smaller in the alphabetical order. 
\end{itemize}
For example, if $A = \{a, b\}$ and we decide to order~$A$ by~$b < a$, then the first few strings in the associated shortlex order are
\[
1, b, a, bb, ba, ab, aa, bbb, \cdots
\]
\end{remark}
In particular, our systems are Noetherian, so 
to show their completeness, it is enough to show that they are locally confluent.

\begin{lemma}[Lemma 12.17 \cite{holt2005handbook}]\label{lem: locally confluent}
The rewriting system $\mathcal{S}$ is locally confluent if and only if for all pairs of rules $(l_1, r_1), (l_2, r_2) \in \mathcal{S}$, the following
conditions are satisfied:
\begin{itemize}[topsep=3pt, partopsep=0pt, itemsep = 0pt]
\item[(i)] If $l_1 = us$ and $l_2 = sv$ with $u, s, v \in A^{\ast}$ and $s \neq \epsilon$, then there exists $w \in A^{\ast}$ that satisfies~$r_1 v \rightarrow ^{\ast} w$, and $u r_2 \rightarrow ^{\ast} w$.
\begin{center}
\begin{tikzpicture}[>={Straight Barb[length=6pt,width=5pt]}, inner sep=2pt]
    \node at (0,0) (u) {$usv$}; 
    \node at (2,0.5) (v1) {$\, r_1v\,$}; 
    \node at (2,-0.5) (v2) {$\, ur_2\,$}; 
    \node at (4,0) (w) {$\quad w \quad$}; 
    
    \path[->] (u) edge (v1);
 	\path[->] (u) edge (v2); 
 	\path[->, dashed] (v1) edge node[above, pos=1] {$_*$} (w);
 	\path[->, dashed] (v2) edge node[above, pos=1] {$_*$} (w);
\end{tikzpicture}
\end{center}
\item[(ii)] If $l_1 = usv$ and $l_2 = s$ with $u, s, v \in A^{\ast}$ and $s \neq \epsilon$, then there exists $w \in A^{\ast}$ that satisfies~$r_1 \rightarrow^{\ast} w$, and $ur_2v \rightarrow^{\ast} w$.
\begin{center}
\begin{tikzpicture}[>={Straight Barb[length=6pt,width=5pt]}, inner sep=2pt]
    \node at (0,0) (u) {$usv$}; 
    \node at (2,0.5) (v1) {$\, r_1\,$}; 
    \node at (2,-0.5) (v2) {$\, ur_2v\,$}; 
    \node at (4,0) (w) {$\quad w \quad$}; 
    
    \path[->] (u) edge (v1);
 	\path[->] (u) edge (v2); 
 	\path[->, dashed] (v1) edge node[above, pos=1] {$_*$} (w);
 	\path[->, dashed] (v2) edge node[above, pos=1] {$_*$} (w);
\end{tikzpicture}
\end{center}
\end{itemize}
\end{lemma}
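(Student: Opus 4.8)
I would treat the two implications separately. One direction is essentially a tautology. If $\mathcal{S}$ is locally confluent then, in the situation of (i), the word $x := usv$ admits the two one-step reductions $x \to r_1 v$ (rewriting the prefix $l_1 = us$) and $x \to u r_2$ (rewriting the suffix $l_2 = sv$), so local confluence supplies the common descendant $w$ required; likewise, in the situation of (ii) one applies local confluence to $x := l_1 = usv$, from which one rewriting step produces either $r_1$ or $u r_2 v$. So the real content is the converse: assuming (i) and (ii) for every ordered pair of rules — a rule paired with itself included — one must deduce local confluence.

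For that I would take an arbitrary $x \in A^\ast$ with $x \to v_1$ and $x \to v_2$, write $x = p_1 l_1 q_1$, $v_1 = p_1 r_1 q_1$ and $x = p_2 l_2 q_2$, $v_2 = p_2 r_2 q_2$ for suitable rules $(l_i, r_i) \in \mathcal{S}$, and split into cases according to how the two distinguished occurrences of $l_1$ and $l_2$ inside $x$ sit relative to one another. If they are disjoint then, exchanging the indices if necessary, $x = p_1 l_1 m l_2 q_2$ for some $m \in A^\ast$, and rewriting the remaining redex in each of $v_1, v_2$ lands both words at $p_1 r_1 m r_2 q_2$; here no hypothesis is needed. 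If one occurrence sits inside the other, say that of $l_2$ inside that of $l_1$, then $l_1 = u' l_2 v'$ for some $u', v' \in A^\ast$, which is precisely the hypothesis of (ii) with $s = l_2$; the resulting $w_0$ with $r_1 \to^\ast w_0$ and $u' r_2 v' \to^\ast w_0$ gives $v_1 \to^\ast p_1 w_0 q_1$ and $v_2 \to^\ast p_1 w_0 q_1$. Finally, if the two occurrences overlap without nesting, a nonempty suffix of one of the $l_i$ coincides with a prefix of the other; after exchanging indices if needed, $l_1 = u s$ and $l_2 = s v$ with $s \neq \epsilon$, the overlapping block inside $x$ is $u s v$, and $v_1, v_2$ arise from $x$ by replacing that block with $r_1 v$ and $u r_2$ respectively, so the word $w_0$ provided by (i) yields a common descendant.

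The only genuine difficulty is the purely combinatorial bookkeeping in the last two cases: reading off the correct factorisation $l_1 = u' l_2 v'$ or $l_1 = u s,\ l_2 = s v$ from the way the occurrences are positioned inside $x$, and checking that the two symmetric sub-cases — which of the two redexes begins first — are both covered, which is legitimate exactly because (i) and (ii) are postulated for \emph{every} ordered pair of rules. I would also stress that termination plays no role in this argument; it enters only afterwards, when Lemma~12.15 upgrades local confluence to confluence and hence completeness. Since our systems are Noetherian by Remark~\ref{rem: len-lex ordering} and both $A$ and $\mathcal{S}$ are finite, the family of pairs to be inspected is finite, so this lemma reduces the verification of completeness to a finite calculation.
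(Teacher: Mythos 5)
This lemma is quoted verbatim from Holt--Eick--O'Brien (Lemma 12.17 of \cite{holt2005handbook}); the paper gives no proof of its own, so there is nothing to compare against except the standard argument in the literature. Your proof is exactly that standard critical-pair argument and it is correct: the forward direction is immediate because $usv$ (resp.\ $l_1$) admits the two one-step reductions in question, and the converse proceeds by positioning the two redex occurrences inside $x$ and splitting into the disjoint, nested, and properly overlapping cases, of which only the last two need the hypotheses (i) and (ii). The bookkeeping you flag as the only delicate point is handled correctly: in the overlap case the word $x$ factors as $p_1(usv)q_2$ and the two reducts are $p_1(r_1v)q_2$ and $p_1(ur_2)q_2$, so the common descendant $w$ from (i) embeds as $p_1wq_2$; the nested case is analogous via (ii); and quantifying (i) and (ii) over all \emph{ordered} pairs (including a rule with itself) covers both symmetric sub-cases. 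Your closing remarks are also apt in the context of this paper: termination is irrelevant to the lemma itself and only enters when local confluence is upgraded to confluence, and finiteness of $\mathcal{S}$ makes the check effective --- though note that in Section \ref{subsec: CRS for tRAAGs} the completed system $\mathcal{R}$ may be infinite, so there the finiteness of the verification rests on the uniform structure of the rules rather than on their number.
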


A pair of rules satisfying either of the two conditions of Lemma \ref{lem: locally confluent} is called a {\it critical pair}; refer to case (i) as an {\it overlap pair} and to (ii) as an {\it inclusion pair}. If one of these conditions fail (so $\mathcal{S}$ is not confluent), we obtain two distinct words $w_1, w_2$ that are irreducible and equivalent under $\leftrightarrow^{\ast}$. One can resolve this instance of incompleteness by adjoining either~$(w_1, w_2)$ or~$(w_2, w_1)$ as a new rule to $\mathcal{S}$. We can then continue to check for local confluence.

\subsection{The Knuth-Bendix completion process}
The procedure of examining critical pairs and adjoining new rules to a rewriting system if necessary, is known as the {\it Knuth-Bendix completion process}. It may happen that this process eventually terminates with a finite complete set, which is what one would like. In general however, the process does not terminate, but it will generate a complete system with an infinite set of rules. In our examples, such an infinite set has a transparent structure, which makes it almost as useful as a finite complete set.

It is not straight forward in general to decide which of $(w_1, w_2)$ or $(w_2, w_1)$ to adjoin to~$\mathcal{S}$ in order to resolve a critical pair. In this article we resolve this problem by imposing a well-ordering~$\leq$ on $A^{\ast}$ that has the property that $u \leq v$ implies $uw \leq vw$ and $wu \leq wv$ for all~$w \in A^{\ast}$. Such an ordering is known as a {\it reduction ordering}. Then we set the larger of~$w_1$ and~$w_2$ as the left-hand side of the new rule. Assumptions on the ordering ensure that~$u \rightarrow^{\ast} v$ implies~$u \geq v$ and, since it is a well-ordering, the system $\mathcal{S}$ will be Noetherian. If $\mathcal{S}$ is complete, the irreducible elements correspond to the least representatives in the~$\leftrightarrow^{\ast}$ equivalence classes.

One of the most commonly used examples of a reduction ordering is a shortlex (or lenlex) ordering, discussed in Remark \ref{rem: len-lex ordering} and mainly used in this article. The Knuth-Bendix algorithm ensures the existence of a complete rewriting system $\mathcal{S'}$ which is equivalent to~$\mathcal{S}$.

\section{Rewriting systems in tRAAGs}\label{sec: rewriting_systems_tRAAGs}

For a group $G$ with a group presentation~$G = \Gpres{S}{R}$, its {\it monoid presentation} is given as~$G = \Mpres{S \cup S^{-1}}{R \cup I}$, with $S^{-1} = \{s^{-1} \mid s \in S\}$,~$I = \{ss^{-1} = s^{-1}s = 1 \mid s \in S\}$.

Let $\Gamma$ be a mixed graph defining $G = G_{\Gamma}$ as in Definition \ref{def_of_traags}. As usual,~$V$ denotes the vertices of~$\Gamma$, and $V^{-1}$ the inverses of elements in $V$. Assume $G$ is given via a monoid presentation with generators $S = V \cup V^{-1}$. We fix a total ordering of $S$, e.g. 
\[
v_1 < v_1^{-1} < \ldots < v_n < v_n^{-1},
\]
which induces a lexicographical ordering of words over $S$. We provide a rewriting system $\mathcal{R}_0$ for $G$ using lenlex ordering of rules as:
\begin{align}\label{eq: tRAAG rewriting system}
\mathcal{R}_0 = \{
& y^{\beta} x^{\alpha} \rightarrow x^{\alpha} y^{\beta} & & \hspace{-8cm} \text{ if } [x, y] \in E, \nonumber \\
& y^{\beta} x^{\alpha} \rightarrow x^{-\alpha} y^{\beta} & & \hspace{-8cm} \text{ if } [x, y \rangle \in E, \nonumber \\
& y^{\beta} x^{\alpha} \rightarrow x^{\alpha} y^{-\beta} & & \hspace{-8cm} \text{ if } \langle x, y] \in E, \nonumber \\
 & z^{\gamma} z^{-\gamma} \rightarrow 1, & \nonumber \\ 
& \text{where } x, y, z \in V \text{ with } x < y, \text{ and } \alpha, \beta, \gamma \in \{-1, 1\}\}.
\end{align}
The first three types of rules are called {\it edge rewrite-rules}, and the fourth type one is called a {\it vertex rewrite-rule}.

The rewriting system $\mathcal{R}_0$ is finite and Noetherian, but not necessarily confluent. We apply the Knuth-Bendix algorithm on $\mathcal{R}_0$ to obtain a complete (not necesarily finite) rewriting system $\mathcal{R}$, which is equivalent to $\mathcal{R}_0$. The rules of $\mathcal{R}$ will be described systematically in a uniform way in Subsection \ref{subsec: CRS for tRAAGs}.

It might happen that $\mathcal{R}$ is finite and complete, but not in general. An example of an infinite complete rewriting system is given in \cite{hermiller1995algorithms} for the pentagon RAAG. 


Moreover, each step of completion produces a rule coming from a critical pair between rules already obtained in a previous step.

Since $\mathcal{R}_0$ is terminating and there is a total ordering of the words, the algorithm ensures the existence of an equivalent rewriting system $\mathcal{R}$ which is complete.


\subsection{Some cases with a complete rewriting system}

Here we provide some cases of tRAAGs with a finite complete rewriting system (FCRS shortly), coming from the definition of $\mathcal{R}_0$.

\begin{example}\label{ex: FCRS for the Z2}
One has a FCRS for $\Z^2 = \Gpres{a,b}{a b = b a}$ given via:
\begin{align*}
\mathcal{R} = \{ & a^{\alpha} a^{-\alpha} \rightarrow 1, \quad b^{\beta} b^{-\beta} \rightarrow 1, \quad b^{\beta} a^{\alpha} \rightarrow a^{\alpha} b^{\beta}, \\
& \text{where } \alpha, \beta \in \{-1, 1\}\}.
\end{align*}
Indeed, the system is terminating, so from Lemma \ref{lem: locally confluent} we only need to check if critical pairs resolve. The only way we obtain critical pairs is overlapping vertex rules with edge rules, e.g.~$a^{\alpha} a^{-\alpha} \rightarrow 1$ with $b^{\beta} a^{\alpha} \rightarrow a^{\alpha} b^{\beta}$, and they resolve because:
\begin{align*}
b^{\beta}a^{\alpha}a^{-\alpha} & = b^{\beta}\underline{a^{\alpha}a^{-\alpha}} \rightarrow b^{\beta}, \text{ and} \\
b^{\beta}a^{\alpha}a^{-\alpha} & =
\underline{b^{\beta} a^{\alpha}}a^{-\alpha} \rightarrow a^{\alpha}\underline{b^{\beta}a^{-\alpha}} \rightarrow \underline{a^{\alpha}a^{-\alpha}}b^{\beta} \rightarrow b^{\beta}.
\end{align*}
Similar checking shows that all critical pairs resolve.
\end{example}

\begin{example}\label{ex: FCRS for the Klein bottle}
Analogous to the previous example, $K = \Gpres{a,b}{a b a = b}$ has a FCRS:
\begin{align*}
\mathcal{R} = \{ & a^{\alpha} a^{-\alpha} \rightarrow 1, \quad b^{\beta} b^{-\beta} \rightarrow 1, \quad b^{\beta} a^{\alpha} \rightarrow a^{-\alpha} b^{\beta}, \\
& \text{where } \alpha, \beta \in \{-1, 1\}\}.
\end{align*}
Here a slightly different critical pair is overlapping vertex rules with the (directed) edge rules, e.g. $a^{\alpha} a^{-\alpha} \rightarrow 1$ with $b^{\beta} a^{\alpha} \rightarrow a^{-\alpha} b^{\beta}$, and they resolve as one has:
\begin{align*}
b^{\beta}a^{\alpha}a^{-\alpha} & = b^{\beta}\underline{a^{\alpha}a^{-\alpha}} \rightarrow b^{\beta}, \text{ and} \\
b^{\beta}a^{\alpha}a^{-\alpha} & =
\underline{b^{\beta} a^{\alpha}}a^{-\alpha} \rightarrow a^{-\alpha}\underline{b^{\beta}a^{-\alpha}} \rightarrow \underline{a^{-\alpha}a^{-(-\alpha)}}b^{\beta} \rightarrow b^{\beta}.
\end{align*}
\end{example}


\subsubsection{Twisted RAAGs over complete graphs}
In Example \ref{ex: FCRS for the Z2} and Example \ref{ex: FCRS for the Klein bottle} we saw FCRS for $\Z^2$ and the Klein bottle group respectively. These examples belong to a more general result, which we give here by providing a FCRS for tRAAGs based on complete graphs.

Let $\Gamma = (V, E)$ be a complete mixed graph with $V = \{v_1, \dots, v_n\}$, and $G = \G$ the corresponding tRAAG. Set $v_1 < v_1^{-1} < \ldots < v_n < v_n^{-1}$ to be a total ordering of generators, 
and let~$\mathcal{R}_0$ be the rewriting system defined by Equation \eqref{eq: tRAAG rewriting system}.

Since $\mathcal{R}$ is defined by a length-lexicographical ordering, it is terminating. Moreover, the overlaps between vertex-rules and edge-rules in $\mathcal{R}$ resolve (see Example \ref{ex: FCRS for the Z2}). In this case we have additional critical pairs coming from two distinct edge rules; e.g. if $v_1 < v_2 < v_3$, and~$[v_1, v_2],\, [v_2, v_3 \rangle,\, [v_3, v_1 \rangle$ are edges, then we have a critical pair from the rules 

\[
v_3^{\alpha_3} v_2^{\alpha_2} \rightarrow v_2^{-\alpha_2} v_3^{\alpha_3} \text{ and } v_2^{\alpha_2} v_1^{\alpha_1} \rightarrow v_1^{\alpha_1} v_2^{\alpha_2} \text{ with each $\alpha_i$ taking values in $\{-1, 1\}$}.
\]
This critical pair resolves because:
\begin{align*}
v_3^{\alpha_3} v_2^{\alpha_2} v_1^{\alpha_1} & = \underline{v_3^{\alpha_3} v_2^{\alpha_2}}v_1^{\alpha_1} \rightarrow v_2^{-\alpha_2} \underline{v_3^{\alpha_3}v_1^{\alpha_1}} \rightarrow \underline{v_2^{-\alpha_2} v_1^{\alpha_1}}v_3^{-\alpha_3} \rightarrow v_1^{\alpha_1} v_2^{-\alpha_2}v_3^{-\alpha_3}, \text{ and}\\
v_3^{\alpha_3} v_2^{\alpha_2} v_1^{\alpha_1} & = v_3^{\alpha_3}  \underline{v_2^{\alpha_2}v_1^{\alpha_1}}\rightarrow \underline{v_3^{\alpha_3}  v_1^{\alpha_1}} v_2^{\alpha_2} \rightarrow v_1^{\alpha_1} \underline{v_3^{-\alpha_3}v_2^{\alpha_2}} \rightarrow v_1^{\alpha_1} v_2^{-\alpha_2}v_3^{-\alpha_3}.
\end{align*}
Similar calculations show that all critical pairs resolve. Hence, $\mathcal{R}$ is finite and complete.

\subsubsection{Twisted RAAGs over bipartite graphs}\label{subsec: Twisted RAAGs over bipartite graphs}
Here we give a FCRS for a tRAAG based on a bipartite graph.

Let $\Gamma = (V, E)$ be a bipartite mixed graph with $V = A \sqcup B$, where any edge in $E$ has one of its endpoints in $A$ and the other in $B$; let $\G$ be the corresponding tRAAG over $\Gamma$.

Set $A = \{a_1, \ldots, a_m\}$ and $B = \{b_1, \ldots, b_n\}$, and let 
\[
a_1 < a_1^{-1} < \cdots < a_m , a_m^{-1} < b_1 < b_1^{-1} < \cdots < b_n < b_n^{-1}
\]
be a total order in $V$. Let $\mathcal{R}$ be the rewriting system defined by the following set of rules:
\begin{align*}
\mathcal{R} = \{ & x^{\alpha} x^{-\alpha} \rightarrow 1, \\
& b_j^{\beta} a_i^{\alpha} \rightarrow a_i^{\alpha} b_j^{\beta} \text{ if } [a_i, b_j] \in E, \\
& b_j^{\beta} a_i^{\alpha} \rightarrow a_i^{-\alpha} b_j^{\beta} \text{ if } [a_i, b_j \rangle \in E, \\
& b_j^{\beta} a_i^{\alpha} \rightarrow a_i^{\alpha} b_j^{-\beta} \text{ if } \langle a_i, b_j] \in E, \\
& \text{where } x \in V \text{ and } \alpha, \beta \in \{-1, 1\}\}
\end{align*}

Since $\mathcal{R}$ is defined by a length-lexicographical ordering, it is terminating. Moreover, the only critical pairs between the rules in $\mathcal{R}$ come from vertex and edge rules, which resolve (as in Examples \ref{ex: FCRS for the Z2} and \ref{ex: FCRS for the Klein bottle}). Hence, $\mathcal{R}$ is finite and complete.

\begin{example}[Star-shaped graphs] This is an example of a bipartite graph, so it has a FCRS associated to it. This exaample serves more to illustrate what happens when we move one letter along a word, which affects the signs of powers of letters in the word, and the sign of the moving letter as well, which is illustrated in Equation \eqref{eq: star shape equation}.

The edges on the link of any vertex $x$ in $\Gamma$ can have three types (see Figure \ref{fig: three types of edges on the link}): directed edges facing away from $x$, directed edges facing towards $x$, and undirected edges.

\begin{figure}[H]
\centering
\begin{tikzpicture}[>={Straight Barb[length=7pt,width=4pt]}, rotate = 22.5] 

\draw [dotted,domain=55:80, thick] plot ({2*cos(\x)}, {2*sin(\x)});

\draw[fill=black] (0,0) circle (1.5pt) node[below] {};
\draw[fill=black] (-0.07,-0.07) circle (0pt) node[below] {$x$};

\draw[fill=black] ({2*cos(90)}, {2*sin(90)}) circle (1.5pt) node[above] {$a_1$};

\draw[fill=black] ({2*cos(45)}, {2*sin(45)}) circle (1.5pt) node[above] {$a_m$};

\draw[thick, ->] (0,0) -- ({2*cos(90)}, {2*sin(90)});
\draw[thick, ->] (0,0) -- ({2*cos(45)}, {2*sin(45)});

\draw [dotted,domain=295:320, thick] plot ({2*cos(\x)}, {2*sin(\x)});

\draw[fill=black] ({2*cos(330)}, {2*sin(330)}) circle (1.5pt) node[right] {$b_1$};

\draw[fill=black] ({2*cos(285)}, {2*sin(285)}) circle (1.5pt) node[below] {$b_n$};

\draw[thick, <-] (0,0) -- ({2*cos(330)}, {2*sin(330)});
\draw[thick, <-] (0,0) -- ({2*cos(285)}, {2*sin(285)});

\draw [dotted,domain=175:200, thick] plot ({2*cos(\x)}, {2*sin(\x)});

\draw[fill=black] ({2*cos(165)}, {2*sin(165)}) circle (1.5pt) node[left] {$c_p$};

\draw[fill=black] ({2*cos(210)}, {2*sin(210)}) circle (1.5pt) node[below] {$c_1$};

\draw[thick] (0,0) -- ({2*cos(165)}, {2*sin(165)});
\draw[thick] (0,0) -- ({2*cos(210)}, {2*sin(210)});

\end{tikzpicture}\caption{Three types of edges on the link of a vertex}\label{fig: three types of edges on the link}
\end{figure}
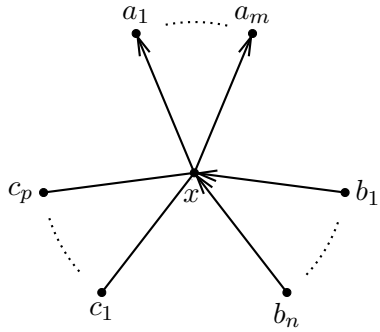

Set $\Link(x) = A \sqcup B \sqcup C$, where for any $a \in A,\, b \in B,\, c \in C$ one has that $[x, a \rangle$, $\langle x, b]$, and~$[x, c]$ are edges in $\Gamma$. So for any $\alpha, \beta, \gamma, \epsilon \in \{-1, 1\}$ one has:
\begin{equation}\label{eq: rewriting equations}
a^{\alpha} x^{\epsilon} = x^{-\epsilon} a^{\alpha}, \quad
b^{\beta} x^{\epsilon} = x^{\epsilon} b^{-\beta}, \quad
c^{\gamma} x^{\epsilon} = x^{\epsilon} c^{\gamma}.
\end{equation}

If $w = g_1^{j_1} \ldots g_k^{j_k}$ is a word over $A \sqcup B \sqcup C$, with each $j_i \in \{-1, 1\}$ for $1 \leq i \leq k$, then one can shuffle generators appearing in $w$ and $x^{\pm 1}$ to go from $wx^\epsilon$ to a word of the form $x^{\epsilon'} w'$, where both $\epsilon'$ and $w'$ are characterized with the following equation:
\begin{equation}\label{eq: star shape equation}
g_1^{j_1} \ldots g_k^{j_k} x^\epsilon = x^{\epsilon'} g_1^{j_1'} \ldots g_k^{j_k'},
\end{equation}
where $\epsilon' = (-1)^l \cdot \epsilon$ with
$l = |\{1 \leq i \leq k \mid g_i \in A \}|$, and one also has the following:
\[
    j_i' = 
\begin{cases}
    j_i, & \text{ if } g_i \in A \sqcup C, \\
    -j_i, & \text{ if } g_i \in B.
\end{cases}
\]
\end{example}

\begin{remark}
With respect to Equation \eqref{eq: rewriting equations} we will use the following notation:
\begin{equation}\label{eq: rewriting equations with triangle brackets}
\langle a, x \rangle = \langle x, a \rangle, \quad
\langle b, x \rangle = \langle x, b \rangle, \quad
\langle c, x \rangle = \langle x, c \rangle,
\end{equation}
which keeps track of the position of the letters (not the signs). However, once we know the position, we can rewrite the exact equality using the information on edges from the graph~$\Gamma$. Moreover, we will use the same notation when the rule comes from words, e.g. Equation \eqref{eq: star shape equation} can be written as $\langle g_1 \ldots g_k, x \rangle = \langle x,  g_1 \ldots g_k \rangle$, which embodies $2^{k+1}$ rules in itself, one for every tuple $(j_1, \ldots, j_k, \epsilon)$ where the coordinates belong to $\{-1, 1\}$. With a slight abuse of notation we will also write the above as $\langle g_1, \ldots , g_k, x \rangle = \langle x,  g_1,  \ldots , g_k \rangle$. 
\end{remark}


\subsection{CRS for tRAAGs}\label{subsec: CRS for tRAAGs}

In this subsection we characterize the rules of a complete rewriting system $\mathcal{R}$ coming via the Knuth-Bendix algorithm, applied on~$\mathcal{R}_0$ which is defined in Equation \eqref{eq: tRAAG rewriting system}. The work here follows the route of \cite{chouraqui2009rewriting} for RAAGs.

Denote by $\mathcal{R}_n$ the rewriting system obtained at the $n$-th step of the completion algorithm. At each step we reduce words in critical pairs to their normal forms modulo the rewriting system at the earlier step. This way we focus only on critical overlap pairs, and avoid inclusion critical pairs.

To a word $w = g_1 \ldots g_k$ with $g_j \in \{v_j, v_j^{-1}\}$ we associate a set of words
\[
\langle w \rangle = \{v_1^{\alpha_1} \ldots v_k^{\alpha_k} \mid \alpha_i \in \{-1, 1\} \text{ for } 1 \leq i \leq k\}.
\]
For the set of words $W = \langle w \rangle$ define its prefixes to be 
\[
\pref(W) = \{v_1^{\alpha_1}, v_1^{\alpha_1} v_2^{\alpha_2},  \ldots,  v_1^{\alpha_1} v_2^{\alpha_2} \cdots v_k^{\alpha_k} \mid \alpha_i \in \{-1, 1\} \text{ for } 1 \leq i \leq k\}.
\]

\begin{lemma}\label{lem: on prefix rules and R_0}
Assume the following set of rules occur in $\mathcal{R}$:
\begin{align*}
\langle v_1, x \rangle & \rightarrow \langle x, v_1 \rangle \\
\langle x, v_i \rangle & \rightarrow \langle v_i, x \rangle \text{ for } 1 \leq i \leq k,
\end{align*}

where $x, v_i \in V$ for $1 \leq i \leq k$ and such that $\{x, v_1, v_2, \ldots, v_k\}$ is not a clique.

Suppose that the rules $\langle w, x \rangle \rightarrow \langle x, w \rangle$ appear in $\mathcal{R}$, with $w = v_1 v_{i_2} \ldots v_{i_k}$. Then for every prefix~$u \in \pref(\langle w \rangle)$, the rules $\langle u, x \rangle \rightarrow \langle x, u \rangle$ also occur in $\mathcal{R}$; these rules are obtained at a step earlier than~$\langle w, x \rangle \rightarrow \langle x, w \rangle$.

\end{lemma}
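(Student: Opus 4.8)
The plan is to induct on the length $k=|w|$, reducing the whole statement to the single claim that the rule for the length-$(k-1)$ prefix $w^{-}:=v_1 v_{i_2}\dots v_{i_{k-1}}$ is already present in $\mathcal{R}$ at a step strictly earlier than $\langle w,x\rangle\rightarrow\langle x,w\rangle$; iterating this claim then produces every shorter prefix at a successively earlier step, which is exactly the conclusion. Throughout, ``prefix'' is read as \emph{proper} prefix (for $u=w$ the assertion would be vacuous), and the whole argument is uniform in the sign choices packaged by the bracket notation. The base case $k=1$ is empty: if $w=v_1$ then $\{x,v_1\}$ carries the edge witnessed by $\langle v_1,x\rangle\rightarrow\langle x,v_1\rangle$, hence is a clique, contradicting the hypothesis.

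For the inductive step assume $k\ge 2$. Since the left-hand side of $\langle w,x\rangle\rightarrow\langle x,w\rangle$ has length $k+1$, this rule is not in $\mathcal{R}_0$, so it is adjoined at some step $n\ge 1$ as the resolution of an overlap critical pair between two rules of $\mathcal{R}_{n-1}$ (recall that at each step critical pairs are first reduced to normal form, so only overlap pairs arise). The heart of the proof is to identify that overlap. I would run through the possible overlaps, using the description of the rules present at stage $n-1$ (vertex and edge rules, together with the word rules $\langle w',y\rangle\rightarrow\langle y,w'\rangle$ from earlier stages), the len-lex order, and the facts that the new left-hand side begins with $v_1$ where $x<v_1$, ends in $x^{\pm 1}$, and is $\mathcal{R}_{n-1}$-irreducible, so that every contiguous subword of it---in particular $w$ itself and each two-letter pair occurring in it---is already reduced. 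This should pin the overlap down to the word $v_1^{\alpha_1}\dots v_{i_{k-1}}^{\alpha_{k-1}}\,x^{\epsilon}\,v_{i_k}^{\gamma}$, with the left rule $\langle w^{-},x\rangle\rightarrow\langle x,w^{-}\rangle$ and the right rule the two-letter edge rule $\langle x,v_{i_k}\rangle\rightarrow\langle v_{i_k},x\rangle$ overlapping along the shared $x^{\epsilon}$: reducing by the right rule produces a word of the form $\langle w,x\rangle$, reducing by the left rule a word of the form $\langle x,w\rangle$, and since $x<v_1$ the latter is the smaller of these two equal-length irreducible words, so the adjoined rule is exactly $\langle w,x\rangle\rightarrow\langle x,w\rangle$. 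This case analysis follows the route of \cite{chouraqui2009rewriting}.

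Granting the identification, $\langle w^{-},x\rangle\rightarrow\langle x,w^{-}\rangle$ lies in $\mathcal{R}_{n-1}$ and hence was obtained at some step $m<n$. If $k-1\ge 2$, then $\{x,v_1,v_{i_2},\dots,v_{i_{k-1}}\}$ cannot be a clique: over a clique sub-alphabet all critical pairs already resolve (this is exactly the complete-graph computation recorded in Section~\ref{sec: rewriting_systems_tRAAGs}), so no word rule whose left-hand side has length $\ge 3$ and uses only those letters is ever generated. The remaining hypotheses, namely $\langle v_1,x\rangle\rightarrow\langle x,v_1\rangle\in\mathcal{R}$ and $\langle x,v_i\rangle\rightarrow\langle v_i,x\rangle\in\mathcal{R}$ for the $v_i$ occurring in $w^{-}$, pass to $w^{-}$ unchanged. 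Thus the inductive hypothesis applies to $w^{-}$, giving that every proper prefix of $w^{-}$---that is, every prefix of $w$ of length $\le k-2$---carries its rule in $\mathcal{R}$ at a step $<m$. Combined with $w^{-}$ itself at step $m<n$, this places all proper prefixes of $w$ at steps $<n$, as required.

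The main obstacle is the overlap analysis in the second paragraph: ruling out that $\langle w,x\rangle\rightarrow\langle x,w\rangle$ could issue from some other critical pair---an overlap involving a vertex rule, a word rule attached to a non-prefix subword, or a two-letter rule on an interior pair such as $v_{i_{k-1}}x$. The Klein-type edges, where $x$ only half-commutes with a neighbour so that sign changes propagate along the word (cf.\ Equation~\eqref{eq: star shape equation}), add bookkeeping but do not change the shape of the argument. I expect the $\mathcal{R}_{n-1}$-irreducibility of the $\langle w,x\rangle$-side---which forces every proper subword of it to be already reduced---to be the lever that eliminates the spurious configurations.
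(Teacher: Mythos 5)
Your overall architecture is sound and, at its core, rests on the same combinatorial fact as the paper's proof: the rule $\langle w,x\rangle\rightarrow\langle x,w\rangle$ is born from an overlap of the rule for the length-$(k-1)$ prefix $w^-$ with the two-letter rule $\langle x,v_{i_k}\rangle\rightarrow\langle v_{i_k},x\rangle$. But you run the induction backwards (on $|w|$, from the given rule down to its parent overlap), whereas the paper runs it forwards (on the steps of the Knuth--Bendix algorithm, from $\mathcal{R}_0$ up). This difference is not cosmetic: in the backward direction the entire burden of the proof falls on showing that the identified overlap is the \emph{only} possible source of the rule $\langle w,x\rangle\rightarrow\langle x,w\rangle$, and that is exactly the step you do not carry out. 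You write that you ``would run through the possible overlaps'' and that this ``should pin the overlap down,'' and you then name this same step as ``the main obstacle.'' As written, the heart of the proof is a promissory note, so there is a genuine gap.

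The paper's forward induction shows how this is actually discharged: it proves, simultaneously with the prefix statement, an auxiliary invariant (the \emph{star condition}: if $x$ is the last letter of a left-hand side and shuffles with everything before it, then $x$ is the first letter of the right-hand side). This invariant is the lever that eliminates the spurious configurations you worry about --- e.g.\ a rule whose left-hand side is an interior subword $\langle v_{i_j}\dots v_{i_{n-1}}x\rangle$ is ruled out because the star condition would force $v_{i_j}>x$, contradicting the presence of $\langle x,v_{i_j}\rangle\rightarrow\langle v_{i_j},x\rangle$ in $\mathcal{R}_0$. Without formulating and co-inducting such an invariant, your irreducibility argument (``every contiguous subword of the new left-hand side is already reduced'') cannot be completed, because you have no a priori control over which words can appear as left-hand sides at stage $n-1$. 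If you want to keep the backward induction, you would still need to prove the star condition (or an equivalent) by a forward induction first, at which point you have essentially reconstructed the paper's argument. Your base case and your observation that the non-clique hypothesis passes to $w^-$ (via the fact that no long rules are supported on a clique) are both correct and match the paper's surrounding lemmas.
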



Let $x, g_i \in V \cup V^{-1}$ and $u = g_1 g_{2} \ldots g_{k}$. We say that the rules $\langle u, x \rangle \rightarrow \langle x, u \rangle$ satisfy the {\it prefix condition} if the rules $\langle g_1, x \rangle \rightarrow \langle x, g_1 \rangle$ and $\langle x, g_i \rangle \rightarrow \langle g_i, x \rangle$  for all $2 \leq i \leq k$, appear in~$\mathcal{R}_0$ as well.

\begin{lemma}
Assume $T = \{v_1, \ldots, v_k\}$ is a clique. Then any word $w \in (T \cup T^{-1})^{\ast}$ can be reduced to its normal form modulo $\mathcal{R}$ by using only rules from $\mathcal{R}_0$. 
\end{lemma}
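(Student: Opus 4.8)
The plan is to show that when $T=\{v_1,\dots,v_k\}$ is a clique, the rewriting system $\mathcal R_0$ restricted to the sub-alphabet $(T\cup T^{-1})^{\ast}$ is already complete, so that no extra rules from later stages of Knuth--Bendix are needed on such words. Since $\mathcal R_0$ is Noetherian (lenlex), by Lemma~\ref{lem: locally confluent} it suffices to check that all critical pairs between rules whose left-hand sides lie in $(T\cup T^{-1})^{\ast}$ resolve using only $\mathcal R_0$-rules with left-hand sides in $(T\cup T^{-1})^{\ast}$. The relevant rules are: the vertex rules $z^{\gamma}z^{-\gamma}\to 1$ for $z\in T$, and the edge rules $v_j^{\beta}v_i^{\alpha}\to \cdots$ for $i<j$ with $\{v_i,v_j\}$ an edge — and here, crucially, \emph{every} pair $v_i,v_j$ in $T$ spans an edge because $T$ is a clique, so all the edge rules among $T$ are present in $\mathcal R_0$ itself.

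First I would enumerate the critical pair types. An overlap $l_1=us$, $l_2=sv$ with left-hand sides of length $2$ forces $s$ to have length $1$: so the overlap word has the form $v_k^{\gamma}v_j^{\beta}v_i^{\alpha}$ with $i\le j\le k$ (indices ordered), and the two rules act on the last two and first two letters respectively. If all three indices are distinct, this is exactly the ``two distinct edge rules'' computation already carried out in Subsection~\ref{subsec: CRS for tRAAGs} for complete graphs — one pushes $v_k^{\gamma}$ past $v_i,v_j$ two ways and the signs/positions agree because the Klein/commutation signs are determined consistently by the edge orientations, and all needed intermediate rules $v_k^{?}v_i^{?}$, $v_k^{?}v_j^{?}$, $v_j^{?}v_i^{?}$ are in $\mathcal R_0$ since $T$ is a clique. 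If two of the indices coincide, one of the ``rules'' is a vertex rule (e.g.\ $s=v_j^{\beta}$ with the middle and an adjacent letter being mutually inverse), and this is precisely the vertex-edge overlap resolved in Examples~\ref{ex: FCRS for the Z2} and~\ref{ex: FCRS for the Klein bottle}; the only subtlety is that a directed edge flips the sign, but the vertex rule still fires after the flip because $(-\alpha)$ and $-(-\alpha)$ are still mutually inverse. Inclusion pairs (case (ii) of Lemma~\ref{lem: locally confluent}) do not arise here since no left-hand side is a proper factor of another (all left-hand sides have length $2$, and two length-$2$ words with one a factor of the other would be equal, contradicting distinctness of left-hand sides).

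Having checked that every critical pair among rules with left-hand sides in $(T\cup T^{-1})^{\ast}$ resolves within $(T\cup T^{-1})^{\ast}$ using only $\mathcal R_0$, I conclude that the sub-system $\mathcal R_0\cap\bigl((T\cup T^{-1})^{\ast}\times(T\cup T^{-1})^{\ast}\bigr)$ is locally confluent, hence complete. Therefore any $w\in(T\cup T^{-1})^{\ast}$ has a unique $\mathcal R_0$-irreducible descendant reachable by these rules; since $\mathcal R$ is equivalent to $\mathcal R_0$ and complete, this $\mathcal R_0$-normal form must coincide with the $\mathcal R$-normal form of $w$ (both are the least element of the $\leftrightarrow^{\ast}$-class, and being $\mathcal R_0$-irreducible it is in particular a candidate; uniqueness in the complete system $\mathcal R$ pins it down — one notes that an $\mathcal R_0$-irreducible word in $(T\cup T^{-1})^{\ast}$ is also $\mathcal R$-irreducible, because any additional $\mathcal R$-rule with left-hand side a factor of such a word would, by Lemma~\ref{lem: on prefix rules and R_0} and the prefix condition, already be an $\mathcal R_0$-rule).

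The main obstacle I anticipate is the bookkeeping of signs in the mixed distinct-index overlap: one must verify that reordering $v_k^{\gamma}$ to the front past $v_j^{\beta}v_i^{\alpha}$ gives the same final exponents on $v_i,v_j$ and the same final exponent on $v_k$ regardless of the order of the two transpositions, across all $2^3$ sign choices and all combinations of the three edge types on the triangle $\{v_i,v_j,v_k\}$. This is exactly the content of the displayed computation in the complete-graph case, so the safe route is to invoke it: any triangle inside a clique is itself a complete graph on three vertices, and the resolution of its critical pairs was already established there; the clique case adds nothing new beyond noting that \emph{all} such triangles are present and their edge rules are genuine $\mathcal R_0$-rules.
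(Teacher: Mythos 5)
Your proposal is correct and follows essentially the same route as the paper's own (much terser) proof: the paper likewise argues that overlaps among rules supported on a clique create no new rules, reducing everything to the complete-graph computation, so that $\mathcal{R}_0$-reduction already reaches the lexicographically least (hence $\mathcal{R}$-irreducible) representative. Your write-up simply fills in the critical-pair bookkeeping that the paper leaves implicit.
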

\begin{proof}
The overlaps between rules involving only elements from a clique, do not create other rules and the proof is basically the same as for tRAAGs over complete graphs: fix a total ordering on the generators; such a word $w$ reduces to a unique irreducible word, which is the least one lexicographically that is obtained by applying rules from $\mathcal{R}_0$.
\end{proof}
\begin{proof}[Proof of Lemma \ref{lem: on prefix rules and R_0}]
We proceed by induction on the number of steps in the Knuth-Bendix algorithm and we prove also the following:
\begin{itemize}[
  align=left,
  leftmargin=8em,
  itemindent=0pt,
  labelsep=0pt,
  labelwidth=8em
]
\raggedright
\item[{\bf star condition}:] if $x$ is the last letter in the left-hand side of a rule that shuffles with all the letters appearing before, then in the right-hand side $x$ is the first letter.
\end{itemize}
At step $0$ of the algorithm, there is nothing to prove and the assumptions hold trivially.

Now on the first step of completion, one has an overlap between the rules $\langle v_1, x \rangle \rightarrow \langle x, v_1 \rangle$,  and $\langle x, v_{i_2} \rangle \rightarrow \langle v_{i_2}, x \rangle$ in $\mathcal{R}_0$ with $i_2 \neq 1$, which gives
\begin{center}
\begin{tikzpicture}[>={Straight Barb[length=6pt,width=5pt]}, inner sep=2pt]
    \node at (0,0) (u) {$\langle v_1, x, v_{i_2} \rangle \,$}; 
    \node at (3,0.5) (v1) {$\, \langle x, v_1, v_{i_2} \rangle$}; 
    \node at (3,-0.5) (v2) {$\, \langle v_1, v_{i_2}, x \rangle$}; 
\path[->] (u) edge  (v1);
\path[->] (u) edge  (v2);
\end{tikzpicture}
\end{center}

This produces a new rule $\langle v_1 , v_{i_2}, x  \rangle \rightarrow \langle x, v_1, v_{i_2} \rangle$, and star condition is fulfilled as well. Now $\rr{x, v_1, v_{i_2}}$ are irreducible modulo $\mathcal{R}_0$ because they are prefixes of $\rr{x, v_1 v_{i_2}\ldots v_{i_k}}$.
Also the words $\rr{v_1, v_{i_2}, x}$ are irreducible modulo $\mathcal{R}_0$, as all subwords are irreducible modulo $\mathcal{R}_0$: indeed, $\rr{v_1, v_{i_2}}$ is irreducible modulo $\mathcal{R}_0$ because on the contrary, there would be an inclusion critical pair with the rule $\langle v_1 v_{i_2} \dots v_{i_k}, x \rangle \rightarrow \langle x, v_1 v_{i_2} \dots v_{i_k} \rangle$  in $\mathcal{R}$, which is not possible by our assumption. 

Suppose that we obtained the rule $\langle v_1 v_{i_2} \dots v_{i_{n-2}}, x \rangle \rightarrow \langle x, v_1 v_{i_2} \dots v_{i_{n-2}} \rangle$ at step $(m-1)$ of completion of $\mathcal{R}_0$, where $v_{i_j} \in \{v_2, \ldots, v_k\}$. Then at step $m$, the overlap between this rule and $ \langle x, v_{i_{n-1}} \rangle \rightarrow \langle v_{i_{n-1}}, x \rangle$, with $i_{n-1} \neq 1$, yields:

\begin{center}
\begin{tikzpicture}[>={Straight Barb[length=6pt,width=5pt]}, inner sep=2pt]
    \node at (0,0) (u) {$\langle v_1 v_{i_2} \dots v_{i_{n-2}}, x, v_{i_{n-1}} \rangle \,$}; 
    \node at (5,0.5) (v1) {$\, \langle x, v_1 v_{i_2} \dots v_{i_{n-1}} \rangle$}; 
    \node at (5,-0.5) (v2) {$\, \langle v_1 v_{i_2} \dots v_{i_{n-1}}, x \rangle$}; 
\path[->] (u) edge  (v1);
\path[->] (u) edge  (v2);
\end{tikzpicture}
\end{center}

It remains to show that one obtains a new rule, whose left side is $\langle v_1 v_{i_2} \dots v_{i_{n-1}}, x \rangle$, and~$\langle x, v_1 v_{i_2} \dots v_{i_{n-1}} \rangle$ as the right-hand side; for this it suffices to demonstrate that these are both irreducible modulo the rewriting system $\mathcal{R}_{m-1}$ obtained one step earlier.
The side $\langle x, v_1 v_{i_2} \dots v_{i_{n-1}} \rangle$ is irreducible modulo $\mathcal{R}$ because it is a prefix of $\langle x v_1 v_{i_2} \dots v_{i_{n-1}} v_{i_n} \rangle$, which is irreducible modulo $\mathcal{R}$. On the other hand, we prove that $\rr{v_1 v_{i_2} \dots v_{i_{n-1}}x}$ is irreducible modulo $\mathcal{R}_{m-1}$, by showing that each of its subwords cannot appear as a left-hand side of a rule in $\mathcal{R}_{m-1}$. Suppose there is such a rule with left side $\langle v_{i_j} v_{i_{j+1}}\dots v_{i_{n-1}}x \rangle$, where $i_j \neq 1$. From star condition, we have that $\langle v_{i_j} v_{i_{j+1}}\dots v_{i_{n-1}}x \rangle \rightarrow \langle x, \ldots \rangle$ since $x$ shuffles with all the~$v_{i_k}$ and this would imply that $v_{i_j} > x$. However, the existence of the rule $\langle x, v_{i_j} \rangle \rightarrow \langle v_{i_j}, x \rangle$ in~$\mathcal{R}_0$, implies~$x > v_{i_j}$, providing a contradiction. Hence, there can be no such rule and analogously one checks the same for other subwords as well as we would obtain inclusion critical pairs with~$\langle x v_1 v_{i_2} \dots v_{i_{n-1}} v_{i_n} \rangle$, so $\langle v_1 v_{i_2} \dots v_{i_{n-1}}x \rangle$ is irreducible modulo~$\mathcal{R}_{m-1}$.

Ultimately, there is a rule 
$\langle v_1 v_{i_2} \dots v_{i_{n-1}}x \rangle \rightarrow \langle x v_1 v_{i_2} \dots v_{i_{n-1}} \rangle$ in $\mathcal{R}$ and star condition holds as well, as required.

\end{proof}
Next, we show that overlaps in $\mathcal{R}$ belong to $S = V \cup V^{-1}$.
\begin{lemma}
Assume the rules 
\begin{align*}
\langle x, v_1, v_{2}, \ldots v_{k} \rangle & \rightarrow \langle v_{k}, x, v_1, v_{2}, \ldots, v_{k-1} \rangle \\
\langle v_1, v_{2}, \ldots v_{k}, y \rangle & \rightarrow \langle y, v_1, v_{2}, \ldots, v_{k} \rangle
\end{align*}
are in $\mathcal{R}$ and  satisfy the prefix condition. Then $k = 1$.
\end{lemma}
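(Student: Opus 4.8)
The plan is to turn the prefix condition satisfied by each of the two hypothesis rules into a short list of two-letter rules that must lie in $\mathcal{R}_0$, to convert those into inequalities for the fixed total order $v_1 < v_1^{-1} < \cdots < v_n < v_n^{-1}$ on the generators, and then to check that these inequalities are jointly satisfiable only when $k=1$.

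First I would read each of the two rules as a shuffle rule $\langle w, m\rangle \rightarrow \langle m, w\rangle$ in which a single letter $m$ is carried from the right end of $wm$ to the front: the first rule is the case $m = v_k$, $w = x v_1 \cdots v_{k-1}$, and the second is the case $m = y$, $w = v_1 \cdots v_k$. Applying the definition of the prefix condition to the first rule then says that the rules $\langle x, v_k\rangle \rightarrow \langle v_k, x\rangle$ and $\langle v_k, v_j\rangle \rightarrow \langle v_j, v_k\rangle$ (for $1 \leq j \leq k-1$) all belong to $\mathcal{R}_0$; applying it to the second rule says that $\langle v_1, y\rangle \rightarrow \langle y, v_1\rangle$ and $\langle y, v_i\rangle \rightarrow \langle v_i, y\rangle$ (for $2 \leq i \leq k$) all belong to $\mathcal{R}_0$. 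Here I would take care that the generators $v_1, \dots, v_k$ occurring in the two rules are literally the same letters — this is exactly the content of the overlap that makes the statement nontrivial — and also note that each of these two-letter rules is necessarily an edge rule of $\mathcal{R}_0$, so the letters involved are distinct generators joined by an edge of $\Gamma$.

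Next I would use the fact that every edge rule of $\mathcal{R}_0$ respects the len-lex order of Remark~\ref{rem: len-lex ordering}: a rule $\langle p, q\rangle \rightarrow \langle q, p\rangle$ can occur in $\mathcal{R}_0$ only if $q < p$. Reading this off the lists above, the first rule forces $v_j < v_k$ for all $1 \leq j \leq k-1$, and the second forces $v_i < y < v_1$ for all $2 \leq i \leq k$. If $k = 1$ both families are vacuous and there is nothing to contradict. If $k \geq 2$, then taking $j = 1$ in the first family and $i = k$ in the second gives $v_1 < v_k$, $v_k < y$ and $y < v_1$, hence $v_k < v_k$ by transitivity, which is absurd. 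Therefore $k = 1$.

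The whole argument is essentially bookkeeping: the only points that need care are correctly matching the indices in the definition of the prefix condition to each of the two rules (identifying the moving letter and the word it shuffles through in each case) and keeping the orientation of the $\mathcal{R}_0$-rules consistent with the chosen order. I do not expect a genuine obstacle beyond getting these identifications right.
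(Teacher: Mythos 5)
Your proposal is correct and follows essentially the same route as the paper's proof: both extract from the prefix condition the two-letter rules $\langle v_k, v_1\rangle \rightarrow \langle v_1, v_k\rangle$, $\langle v_1, y\rangle \rightarrow \langle y, v_1\rangle$ and $\langle y, v_k\rangle \rightarrow \langle v_k, y\rangle$ in $\mathcal{R}_0$, translate them into the inequalities $v_k > v_1$, $v_1 > y$, $y > v_k$, and obtain a contradiction by transitivity when $k \geq 2$. Your identification of the moving letter and the shuffled word in each rule matches the paper's reading exactly.
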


\begin{proof}
Assume that $k > 1$. The first written rule satisfies the condition on prefixes, 
so the rule~$\rr{v_k, v_1} \rightarrow \rr{v_1, v_k}$ occurs in $\mathcal{R}_0$, 
in particular implying that $v_k > v_1$. Analagously, from the second rule one has $v_1 > y$ and $y > v_k$. Ultimately, $v_1 > v_k$, which is a contradiction.
\end{proof}

\begin{lemma}
The Knuth-Bendix algorithm of completion of $\mathcal{R}_0$ yields an equivalent complete rewriting system $\mathcal{R}$ satisfying these conditions:
\begin{itemize}
\item[(i)] All the rules in $\mathcal{R}$ appear as $\langle u, x \rangle \rightarrow \langle x, u \rangle$ with $u = \langle v_1 v_2 \dots v_{k-1} v_k \rangle$ and $x, v_i \in V \cup V^{-1}$ for all $1 \leq i \leq k$, and they satisfy the prefix condition.
\item[(ii 
)] New rules are created only from overlaps between rules from $\mathcal{R}$ at left and rules from~$\mathcal{R}_0$ at right; all the other kinds of overlaps resolve (including overlaps between rules from~$\mathcal{R}_0$ at left and rules from~$\mathcal{R}$ at right).

\item[(iii)] The rules created in the $n$-th step of completion, have words of length $n+2$ in each side.
\end{itemize}
\end{lemma}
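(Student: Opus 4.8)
The plan is to prove the three items by a single induction on the number $n$ of completion steps, carrying all three statements (and the \textbf{star condition} from the proof of Lemma~\ref{lem: on prefix rules and R_0}) as a simultaneous inductive hypothesis. The base case $n=0$ is $\mathcal{R}_0$ itself: its rules are exactly of the form $\langle y, x\rangle \to \langle x, y\rangle$ (a single pair of letters) together with the vertex rules $z^\gamma z^{-\gamma}\to 1$, so (i) holds with $k=1$, (iii) holds since such a rule has length $n+2 = 2$ on each side, and (ii) is vacuous. For the inductive step I would assume $\mathcal{R}_{n-1}$ satisfies (i)--(iii) and analyse which critical pairs can arise.

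\textbf{First}, I would classify the overlaps. By the preceding lemma, two rules of $\mathcal{R}$ of the shape $\langle x, u\rangle \to \langle \dots\rangle$ and $\langle u', y\rangle \to \langle \dots\rangle$ that both satisfy the prefix condition can only overlap in a single letter, i.e. only when the relevant $k=1$; combined with the fact that a rule of $\mathcal{R}\setminus\mathcal{R}_0$ begins and ends with letters from $V\cup V^{-1}$, this forces any genuinely new overlap to be between an $\mathcal{R}$-rule $\langle v_1 v_{i_2}\dots v_{i_{n}}, x\rangle \to \langle x, v_1 v_{i_2}\dots v_{i_n}\rangle$ on the left and an $\mathcal{R}_0$-edge-rule $\langle x, v_{i_{n+1}}\rangle \to \langle v_{i_{n+1}}, x\rangle$ on the right, overlapping in the single letter $x$ — this is the mechanism already exhibited in the proof of Lemma~\ref{lem: on prefix rules and R_0}. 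Overlaps of an $\mathcal{R}_0$-rule on the left with an $\mathcal{R}$-rule on the right, overlaps involving a vertex rule $z^\gamma z^{-\gamma}\to 1$, and overlaps of two $\mathcal{R}_0$-edge-rules, must all be shown to resolve: the vertex/edge resolutions are exactly the computations in Examples~\ref{ex: FCRS for the Z2} and~\ref{ex: FCRS for the Klein bottle} (applied one letter at a time), and the $\mathcal{R}_0$-on-left/$\mathcal{R}$-on-right case is ruled out or resolved using the \textbf{star condition} together with the prefix condition, since the would-be overlap would impose contradictory inequalities on the ordering of generators (as in the last lemma's proof). This establishes (ii).

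\textbf{Next}, for the new rule produced by the one remaining type of overlap, Lemma~\ref{lem: on prefix rules and R_0} and its proof already show that both sides $\langle v_1 v_{i_2}\dots v_{i_{n+1}}, x\rangle$ and $\langle x, v_1 v_{i_2}\dots v_{i_{n+1}}\rangle$ are irreducible modulo $\mathcal{R}_{n-1}$ (the right side as a prefix of a longer rule's right side, the left side by the subword-analysis using the star condition), so reducing the critical pair to normal form modulo $\mathcal{R}_{n-1}$ yields precisely the rule $\langle v_1 v_{i_2}\dots v_{i_{n+1}}, x\rangle \to \langle x, v_1 v_{i_2}\dots v_{i_{n+1}}\rangle$, oriented this way because the lenlex/reduction ordering makes the left side the larger word. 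This new rule has the required shape of (i) and satisfies the prefix condition: its first-letter rule $\langle v_1, x\rangle\to\langle x,v_1\rangle$ and each $\langle x, v_{i_j}\rangle\to\langle v_{i_j},x\rangle$ were already present in $\mathcal{R}_0$ because the rule we started from (at step $n-1$) satisfied the prefix condition and the new edge-rule $\langle x, v_{i_{n+1}}\rangle\to\langle v_{i_{n+1}},x\rangle$ lies in $\mathcal{R}_0$ by hypothesis. For (iii), the step-$(n-1)$ rule has length $n+1$ on each side by the inductive hypothesis, the overlap word $\langle v_1\dots v_{i_n}, x, v_{i_{n+1}}\rangle$ has length $n+3$, and both resolvents — hence the new rule — have length $n+3 = (n+1)+2$ on each side, as claimed. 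Finally, completeness of $\mathcal{R}$: the system is Noetherian because every rule respects the reduction ordering (Remark~\ref{rem: len-lex ordering}), and by (ii) every critical pair of $\mathcal{R}$ either resolves or gives rise to a rule that is already in $\mathcal{R}$, so $\mathcal{R}$ is locally confluent, hence complete by Lemma~12.15 of~\cite{holt2005handbook}; equivalence of $\mathcal{R}$ with $\mathcal{R}_0$ is guaranteed by the Knuth--Bendix process itself.

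\textbf{The main obstacle} I anticipate is item (ii) — specifically, proving that the overlaps one would \emph{not} want to generate new rules (an $\mathcal{R}_0$-rule on the left meeting an $\mathcal{R}$-rule on the right, and overlaps touching the vertex rules $z^\gamma z^{-\gamma}\to 1$) genuinely resolve rather than spawning rules outside the uniform family in (i). The vertex-rule overlaps are routine but must be checked with the sign bookkeeping of Equation~\eqref{eq: star shape equation}, since moving a letter past a directed edge flips signs; the subtle case is an $\mathcal{R}_0$-edge-rule $\langle z, x\rangle\to\langle x,z\rangle$ overlapping on $x$ with an $\mathcal{R}$-rule $\langle x, v_1\dots v_k\rangle\to\langle\dots\rangle$, where one has to use the star condition to see that this does not produce a rule breaking the pattern — this is essentially the content of the two lemmas immediately preceding the statement, so the work is to assemble those into the induction cleanly rather than to discover anything new.
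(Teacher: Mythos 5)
Your overall strategy --- a simultaneous induction on the completion step, using Lemma~\ref{lem: on prefix rules and R_0} (with its star condition) and the unlabelled ``$k=1$'' lemma to force every genuinely new overlap to be an $\mathcal{R}$-rule $\langle u, x\rangle \rightarrow \langle x, u\rangle$ on the left meeting an $\mathcal{R}_0$-edge-rule $\langle x, y\rangle \rightarrow \langle y, x\rangle$ on the right --- is the same as the paper's. Your outline of item (ii) and of the overlaps with the vertex rules $z^{\gamma}z^{-\gamma}\rightarrow 1$ also matches the paper's argument.

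The gap is in your treatment of the one remaining overlap $\langle u, x, y\rangle$. You assert that reducing this critical pair modulo $\mathcal{R}_{n-1}$ yields precisely the rule with $y$ appended to the end of $u$, justified by irreducibility of both resolvents; but that irreducibility is exactly what can fail, and the paper devotes most of its inductive step to a three-way case analysis that you omit. If $\langle u, y\rangle \rightarrow \langle y, u\rangle$ is already in $\mathcal{R}_{n-1}$, the overlap resolves outright and no rule is created. If only a suffix shuffles, i.e.\ $u = u_1 u_2$ with $u_1$ nonempty and $\langle u_2, y\rangle \rightarrow \langle y, u_2\rangle$ in $\mathcal{R}_{n-1}$, the resolvent $\langle u, y, x\rangle$ reduces further and the new rule is $\langle u_1, y, u_2, x\rangle \rightarrow \langle x, u_1, y, u_2\rangle$: the new letter is spliced into the \emph{interior} of the word, not appended, and one must check separately that this rule still has the shape required by (i), satisfies the prefix condition, and fits the length count of (iii). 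Finally, if $\langle u, y\rangle$ is not freely reduced (i.e.\ $v_k = y^{-1}$), the overlap must be shown to resolve via the prefix rule $\langle v_1 v_2 \dots v_{k-1}, x\rangle \rightarrow \langle x, v_1 v_2 \dots v_{k-1}\rangle$, whose existence is exactly what the prefix condition carried through the induction guarantees. Without these cases your argument only covers the generic overlap and does not rule out rules falling outside the uniform family of (i), nor does it account for overlaps that should resolve without creating any rule. The remainder of your sketch (base case, classification of overlaps, and the concluding appeal to termination plus local confluence) is sound and agrees with the paper.
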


\begin{proof}
We use induction on the number of steps of the Knuth-Bendix algorithm on $\mathcal{R}_0$.

In step $0$, the rules in $\mathcal{R}_0$ are of the form $\langle v, x \rangle \rightarrow \langle x,v \rangle$, with $\{x, v\} \in E$. Part (i) holds trivially. Also, all rules have words of length $2$ in each side, so (iii) also holds.

In step $1$, there are overlaps between $\langle z, y \rangle \rightarrow \langle y, z \rangle$ and
$\langle y, x \rangle \rightarrow \langle x, y \rangle$; therefore, new rules~$\langle z, x, y \rangle \rightarrow \langle y, z, x \rangle $ are produced. Hence, both (i) and (iii) hold.

Now we prove that part (ii) holds as well in step $1$. Suppose that in $\mathcal{R}_0$ we have the rules~$\langle y, z_1 \rangle \rightarrow \langle z_1, y \rangle$ and
$\langle z_1, x_1 \rangle \rightarrow \langle x_1, z_1 \rangle$ which produce the rules $\langle y, x_1, z_1 \rangle \rightarrow \langle z_1, y, x_1 \rangle$.

It remains to show that we obtain a resolution between overlaps of $\langle z, x, y \rangle \rightarrow \langle y, z, x \rangle$ on the left, with $\langle y, x_1, z_1 \rangle \rightarrow \langle z_1, y, x_1 \rangle$ on the right. 

Firstly, the rule $\langle z, x, y \rangle \rightarrow \langle y, z, x \rangle$ overlaps with $\langle y, z_1 \rangle \rightarrow \langle z_1, y \rangle$ to give: 

\begin{center}
\begin{tikzpicture}[>={Straight Barb[length=6pt,width=5pt]}, inner sep=2pt]
    \node at (0,0) (u) {$\langle z, x, y, z_1 \rangle$}; 
    \node at (3,0.5) (v1) {$\langle y, z, x, z_1 \rangle$}; 
    \node at (3,-0.5) (v2) {$\langle z, x, z_1, y \rangle$}; 
    \path[->] (u) edge  (v1);
 	\path[->] (u) edge  (v2);
\end{tikzpicture}
\end{center}
which provides the rule: $\langle z, x, z_1, y \rangle \rightarrow \langle y, z, x, z_1 \rangle$. Together, with $\langle z_1, x_1 \rangle \rightarrow \langle x_1, z_1 \rangle$ this gives a resolution to:
\begin{center}
\begin{tikzpicture}[>={Straight Barb[length=6pt,width=5pt]}, inner sep=2pt]
    \node at (0,0) (u) {$\langle z, x, y, x_1, z_1 \rangle$}; 
    \node at (5,0.55) (v1) {$\langle y, z, x, x_1, z_1 \rangle$}; 
    \node at (5,-0.55) (v2) {$\langle z, x, z_1, y, x_1 \rangle$}; 
    \path[->] (u) edge  (v1);
 	\path[->] (u) edge  (v2);
 	\path[->, dotted, thick] (v2) edge  (v1);
\end{tikzpicture}
\end{center}
Hence, (ii) holds in the first step.

Assume that the Knuth-Bendix algorithm fulfills conditions (i), (ii), (iii) in its $(n-1)$-th step. Now, in the $n$-th step, new rules are obtained from overlap critical pairs between rules obtained at the $(n-1)$-th step and rules obtained at the $(n-1)$-th step or earlier. Suppose~$\langle u, x \rangle \rightarrow \langle x, u \rangle $ is a rule obtained at the $(n-1)$-th step which satisfies the prefix condition, with $u = \langle v_1 v_2 \dots v_{n-1} v_n \rangle$ and $x, v_i \in V \cup V^{-1}$ for all $1 \leq i \leq n$. Moreover, assume the rule $\langle x, w, y \rangle \rightarrow \langle y, x, w \rangle$ in $\mathcal{R}_{n-1}$ satisfies the prefix condition, where $w$ is a word and~$y \in V \cup V^{-1}$.
If $w$ is not the empty word, then (ii) implies that the overlap between rules~$\langle  u, x \rangle \rightarrow \langle x, u \rangle$ and $\langle  x, w, y \rangle \rightarrow \langle y, x, w \rangle$ resolves. Hence, assume $w$ is the empty word, i.e. consider the overlaps between $\langle  u, x \rangle \rightarrow \langle x, u \rangle$ and $\langle  x, y \rangle \rightarrow \langle y, x \rangle$:
\begin{center}
\begin{tikzpicture}[>={Straight Barb[length=6pt,width=5pt]}, inner sep=2pt]
    \node at (0,0) (u) {$\langle u, x, y \rangle $}; 
    \node at (3,0.5) (v1) {$\langle x, u, y \rangle $}; 
    \node at (3,-0.5) (v2) {$\langle u, y, x \rangle $}; 
    \path[->] (u) edge  (v1);
 	\path[->] (u) edge  (v2);
\end{tikzpicture}
\end{center}
We distinguish three cases to check the critical overlaps above. We prove first that conditions~(i) and~(iii) are satisfied in each case, and then later deal with case (ii).

{\bf Case 1:} Both $\langle x,u,y \rangle$ and $\langle  u,y,x \rangle$ are freely reduced and irreducible modulo $\mathcal{R}_{n-1}$. A new rule $\langle u,y,x \rangle \rightarrow \langle x,u,y \rangle$ is created which satisfies the prefix condition, so assumption (i) holds in $\mathcal{R}_n$. By the induction assumption (iii) we know that both $\langle u, y, x \rangle$ and $\langle x,u,y \rangle$ have length $n+2$, so (iii) holds as well.

{\bf Case 2:} Both $\langle x,u,y \rangle$, $\langle u,y,x \rangle$ are freely reduced but potentially reducible modulo~$\mathcal{R}_{n-1}$.

If $\langle u, y \rangle \rightarrow \langle y, u \rangle$ occurs in $\mathcal{R}_{n-1}$ then the overlap resolves by first using~$\langle u,y \rangle \rightarrow \langle y,u \rangle$ and then proceeding by~$\langle u,x \rangle \rightarrow \langle x,u \rangle$ and $\langle x, y \rangle \rightarrow \langle y,x \rangle$.

If one has a split $u=u_1u_2$, with $u_1$ non-empty, and $\langle u_2,y \rangle \rightarrow \langle y,u_2 \rangle$ takes place in $\mathcal{R}_{n-1}$, then one has:
\begin{center}
\begin{tikzpicture}[>={Straight Barb[length=6pt,width=5pt]}, inner sep=2pt]
    \node at (0,0) (u) {$\langle u, x, y \rangle $}; 
    \node at (0,-0.55) (u1) {$\verteq$}; 
    \node at (0,-1) (u2) {$\langle u_1, u_2, x, y \rangle$}; 
    \node at (3,-1) (v1) {$\langle u_1, u_2, y, x \rangle$}; 
    \node at (6,-1) (v2) {$\langle u_1, y, u_2, x \rangle$}; 
    \node at (3,0) (w1) {$\langle x, u_1, u_2, y \rangle$};
    \node at (6,0) (w2) {$\langle x, u_1, y, u_2 \rangle$};
     
    \path[->] (u2) edge  (v1);
    \path[->] (v1) edge  (v2);
 	\path[->] (u)  edge  (w1);
 	\path[->] (w1) edge  (w2);
\end{tikzpicture}
\end{center}
Hence, a new rule: $\langle u_1, y, u_2, x \rangle \rightarrow \langle x, u_1, y, u_2 \rangle$ is created, that satisfies the prefix condition, therefore both (i) and (iii) are fulfilled in this case as well.

{\bf Case 3:} Assume that at least one of $\langle x,u,y \rangle$ and $\langle u,y,x \rangle$ is not freely reduced. Since all the subwords $\langle x,u \rangle$, $u$, and $\langle y,x \rangle$ are reduced, one has that $\langle u,y \rangle$ is not freely reduced, i.e.~$v_k = y^{-1}$. One has:
\begin{center}
\begin{tikzpicture}[>={Straight Barb[length=6pt,width=5pt]}, inner sep=2pt]
    \node at (0,0) (u) {$\langle u, x, y \rangle $}; 
    \node at (0,-0.55) (u1) {$\verteq$}; 
    \node at (0,-1) (u2) {$\langle v_1 v_2 \dots v_{k-1} y^{-1}, x, y \rangle$}; 
    \node at (5,-1) (v1) {$\langle v_1 v_2 \dots v_{k-1} y^{-1}, y, x  \rangle$}; 
    \node at (9.5,-1) (v2) {$\langle v_1 v_2 \dots v_{k-1}, x \rangle$}; 
    \node at (5,0) (w1) {$\langle x,v_1 v_2 \dots v_{k-1} y^{-1}, y \rangle$};
    \node at (9.5,0) (w2) {$\langle x,v_1 v_2 \dots v_{k-1} \rangle$};
     
    \path[->] (u2) edge  (v1);
    \path[->] (v1) edge  (v2);
 	\path[->] (u)  edge  (w1);
 	\path[->] (w1) edge  (w2);
\end{tikzpicture}
\end{center}
Note that when $x$ shuffles with $y$, only $x$ can change the sign of its power.

Since $\langle v_1 v_2 \dots v_{k-1}  \rangle$ belongs to $\pref(\langle u \rangle)$ and the rule $\langle u, x  \rangle \rightarrow \langle x, u \rangle$ in $\mathcal{R}_{n-1}$ satisfies the prefix condition, there is a rule
$\langle v_1 v_2 \dots v_{k-1}, x \rangle \rightarrow \langle x, v_1 v_2 \dots v_{k-1} \rangle$ obtained at an earlier step, which means that this  overlap resolves via this rule.
Ultimately, both (i) and (iii) hold in $\mathcal{R}$. Now, we proceed to show that the condition (ii) holds as well.

Suppose $\langle u, x \rangle \rightarrow \langle x, u \rangle$ is in $\mathcal{R}_n$, and satisfies the prefix condition. Let~$\langle x, w, y \rangle \rightarrow  \langle y, x, w \rangle$ be a rule in $\mathcal{R}_n \setminus \mathcal{R}_0$, where $w = \langle  v_1 v_2 \dots v_k  \rangle$ with $v_i \in V \cup V^{-1}$, such that there is an overlap with $\langle u, x  \rangle \rightarrow \langle x, u \rangle$ on the left and $\langle x, w, y \rangle \rightarrow \langle y, x, w \rangle$ on the right. To show that (ii) holds, we need to conclude that this overlap resolves.

From (i) we know that the rule $\langle x, w, y \rangle \rightarrow \langle y, x, w \rangle$ satisfies the prefix condition. So:
\begin{center}
\begin{tikzpicture}[>={Straight Barb[length=6pt,width=5pt]}, inner sep=2pt]
    \node at (0,0) (u) {$\langle u, x, w, y \rangle$}; 
    \node at (0,-0.55) (u1) {$\verteq$}; 
    \node at (0,-1) (u2) {$\langle u, x, v_1 v_2 \dots v_k, y \rangle$}; 
    \node at (5,-1) (v1) {$\langle u, y, x, w \rangle$}; 
    \node at (8.75,-1) (v2) {$\langle u, y, x, v_1 v_2 \dots v_k \rangle$}; 
    \node at (5,0) (w1) {$\langle x, u, w, y \rangle $};
    \node at (8.75,0) (w2) {$\langle x,u, v_1 v_2 \dots v_{k} y  \rangle$};
    
    \path[->] (u2) edge  (v1);
    \node at (6.5, 0) {$=$}; 
 	\path[->] (u)  edge  (w1);
 	\node at (6.5, -1) {$=$}; 
\end{tikzpicture}
\end{center}
As $\langle x, w, y \rangle \rightarrow \langle y, x, w \rangle$ satisfies the prefix condition, there are rules $\langle x, y \rangle \rightarrow \langle y, x \rangle$ and also~$\langle y, v_i \rangle \rightarrow \langle v_i, y \rangle$ in $\mathcal{R}_0$. Therefore, one has an overlap of $\langle u, x \rangle \rightarrow \langle x, u \rangle$ on the left and~$\langle x, y \rangle \rightarrow \langle y, x \rangle$ on the right, which created the rule $\langle u, y, x \rangle \rightarrow \langle x, u, y \rangle$. 

Hence, using this rule and the rules $\langle y, s_i  \rangle \rightarrow \langle s_i, y \rangle$ with $1 \leq i \leq k$ we obtain:
\begin{center}
\begin{tikzpicture}[>={Straight Barb[length=6pt,width=5pt]}, inner sep=2pt]
    \node at (0,0) (u) {$\langle u, x, w, y \rangle$}; 
    \node at (0,-0.55) (u1) {$\verteq$}; 
    \node at (0,-1) (u2) {$\langle u, x, v_1 v_2 \dots v_k, y \rangle$}; 
    \node at (4,-1) (v1) {$\langle u, y, x, v_1 v_2 \dots v_k \rangle$}; 
    \node at (8,-1) (v2) {$\langle x, u, y, v_1 v_2 \dots v_k \rangle$}; 
        \node at (12,-1) (v3) {$\langle x, u, v_1 v_2 \dots v_k, y \rangle$}; 
    \node at (4,0) (w1) {$\langle x, u, w, y \rangle $};

    \path[->] (u2) edge  (v1);
    \path[->] (v1) edge  (v2);
    \path[->] (v2) edge  (v3);
 	\path[->] (u)  edge  (w1);
 	
\end{tikzpicture}
\end{center}
Which means that this overlap resolves. 

Finally, we prove that the overlaps of rules with rules of free reduction resolve. Pick a rule~$\langle u, x \rangle \rightarrow \langle x, u \rangle$ satisfying the prefix condition, with $u = \langle v_1 v_2 \dots v_{k-1} v_k\rangle$ and with elements~$x, v_i \in X \cup X^{-1}$ for all $1 \leq i \leq k$. We first consider the overlap with $\langle u, x \rangle \rightarrow \langle x, u \rangle$ on the left and $\langle x,x^{-1}\rangle \rightarrow 1$ on the right:
\[
\langle uxx^{-1} \rangle \rightarrow \langle xux^{-1} \rangle \rightarrow  \langle xx^{-1}u \rangle \rightarrow u,
\]
as desired.

Similarly, the overlap with $\langle v_1^{-1}v_1 \rangle \rightarrow 1$ on the left and $\langle u, x \rangle \rightarrow \langle x, u \rangle$ on the right:
\begin{align*}
\langle v_1^{-1},v_1 v_2 \dots v_{k} x  \rangle \rightarrow \langle v_1^{-1},x, v_1 v_2 \dots v_{k} \rangle \rightarrow \langle x,v_1^{-1},v_1 v_2 \dots v_{k}  \rangle & \rightarrow \langle x,v_2 \dots v_{k} \rangle \rightarrow \ldots  \\ 
& \rightarrow \langle v_2 \dots v_{k} x \rangle,
\end{align*}
as desired. Therefore, condition (ii) holds as well in $\mathcal{R}$.
\end{proof}

\section{Normal form theorem in tRAAGs}\label{sec: normal_form}
When we talk about languages, words, and growth in tRAAGs, we will use $S = V\sqcup V^{-1}$ as our alphabet (or preferred monoid generating set in the context of groups), where~$V$ will always denote the vertex set of the defining graph $\Gamma$. 

The following relations hold for a tRAAG based on $\Gamma$:
\begin{enumerate}[itemsep=4pt,parsep=0pt,topsep=4pt, itemindent=40pt]
\item[(1)] $u^m u^n  = u^{m+n}$ for any $m,n \in \Z$, and any $u\in V\ga$,
\item[(2)] $u^m v^n  = v^n u^m$ for any $m,n \in \Z$ if $[u,v] \in \overline{E\ga}$,
\item[(3)] $u^m v^n = v^n u^{(-1)^n m}$ for any $m,n \in \Z$ if $[u,v\rangle \in \overrightarrow{E\ga}$.
\end{enumerate}
One obtains back the original relations by putting $m=n=1$.
The relation in (1) is called a {\it multiplication relation} in $\langle u \rangle$, while (2) and (3) are called {\it graph relations} based on $u$ (or in  $v$).

\begin{mydef}\label{def_word_on_vertex_group}
Any element $g\in G_{\Gamma}$ can be represented as a product $w = g_1\ldots g_r$ where each $g_i$ is an element of some $\langle u \rangle$ with $u\in V$; in this case we say that $w$ is a {\it word} representing the element $g$, and $g_i$ are {\it syllables} of $w$. The number of syllables in the word $w$ is called the {\it syllable length} of $w$, and denoted by $\text{sl}(w)$.  
\end{mydef}

\begin{mydef}\label{def_normal_form}
The word $w$ is called a {\it normal form} for $g \in G_{\ga}$ if $w$ represents $g$ in $G_{\ga}$ and~$w$ is reducible with respect to the CRS for $\G$.
\end{mydef}

\begin{remark}
Normal forms are products of syllables with the smallest syllable length in their corresponding class, also with the smallest shortlex length.
\end{remark}

In a group $G$ with a monoid generating set $S$ the {\it word problem} consists of determining weather or not a given word $w$ over $S$ represents the identity $1 \in G$.

As a corollary of the normal form theorem for tRAAGs we get the following:

\begin{cor}
In the class of tRAAGs the word problem is solvable.
\end{cor}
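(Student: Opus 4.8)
The plan is to derive the corollary directly from the existence of a complete rewriting system $\mathcal{R}$ for the tRAAG $G_{\Gamma}$, which was established in Section~\ref{sec: rewriting_systems_tRAAGs}, together with the normal form theorem of the present section. Recall that $G_{\Gamma}$ is presented as a monoid over the alphabet $S = V \cup V^{-1}$, and that $\mathcal{R}$ is Noetherian and confluent. The key point is that for a complete rewriting system, every word $w \in S^{\ast}$ has a \emph{unique} irreducible descendant, its normal form, and two words represent the same group element if and only if they have the same normal form (this is Lemma 12.16 of \cite{holt2005handbook}, cited in Section~\ref{sec: overview_rewriting_systems}).

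First I would observe that $1 \in G_{\Gamma}$ is represented by the empty word, which is itself irreducible, hence is its own normal form. Therefore a word $w$ over $S$ represents the identity if and only if its normal form is the empty word. Second, I would note that the normal form of any given $w$ is \emph{computable}: since $\mathcal{R}$ is Noetherian, one applies rewrite rules to $w$ in any order; the process must terminate after finitely many steps (because the len-lex ordering of Remark~\ref{rem: len-lex ordering} strictly decreases at each step, and there is no infinite descending chain), and by confluence the irreducible word reached does not depend on the choices made. The only subtlety is that $\mathcal{R}$ may be infinite, so I would point out that to reduce a word of a given length only finitely many rules are relevant — indeed, by part~(iii) of the last lemma the rule created at step $n$ has both sides of length $n+2$, so a rule can apply to $w$ only if its left-hand side has length at most $|w|$, and there are finitely many such rules, each effectively described by the uniform $\langle u, x\rangle \rightarrow \langle x, u\rangle$ pattern. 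Hence the reduction is algorithmic.

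Putting these together: given $w$, compute its normal form by repeatedly applying applicable rules from $\mathcal{R}$ until an irreducible word is obtained, then check whether that word is empty. This terminates and is correct, so the word problem for $G_{\Gamma}$ is solvable. Since $\Gamma$ was an arbitrary mixed graph, this holds for every tRAAG.

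I do not expect a serious obstacle here; the content is entirely a packaging of the completeness of $\mathcal{R}$ and the standard fact that complete rewriting systems solve the word problem. The one place requiring a line of care is the effectiveness of reduction when $\mathcal{R}$ is infinite, which is handled by the length bound on rules noted above; everything else is immediate from the results already proved.
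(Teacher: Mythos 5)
Your proposal is correct and follows the same route as the paper: reduce $w$ to its normal form using the complete rewriting system and check whether the result is the empty word. The paper's own proof is a one-line version of this; your additional observation that only the finitely many rules with left-hand side of length at most $l(w)$ are relevant (so that reduction is effective even when $\mathcal{R}$ is infinite) is a genuine point of care that the paper leaves implicit.
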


\begin{proof}
Let $w$ be a word in $G_{\Gamma}$. Then $w = 1$ in $G_{\Gamma}$ if and only if the normal form of $w$ is the empty sequence.
\end{proof}

\pgfdeclaredecoration{free hand}{start}
{
  \state{start}[width = +0pt,
                next state=step,
                persistent precomputation = \pgfdecoratepathhascornerstrue]{}
  \state{step}[auto end on length    = 3pt,
               auto corner on length = 3pt,               
               width=+2pt]
  {
    \pgfpathlineto{
      \pgfpointadd
      {\pgfpoint{2pt}{0pt}}
      {\pgfpoint{rand*0.15pt}{rand*0.15pt}}
    }
  }
  \state{final}
  {}
}
 \tikzset{free hand/.style={
    decorate,
    decoration={free hand}
    }
 } 
\def\freedraw#1;{\draw[free hand] #1;}

\section{Applications of the normal form theorem}\label{applications_normal_form}
    
Let $G = G_{\Gamma}$ be the tRAAG based on a mixed graph $\Gamma = (V, E)$, and $S = V \sqcup V^{-1}$.
\begin{notation}
Denote by $l(w)$ the length of a word $w$ with respect to $S$; and by $|g|$ the length of the shortest word representing $g$. One refers to $|g|$ as the length of $g$ in $G$.
\end{notation}
Forgetting about directions of edge of a mixed graph, gives a useful way on the study of similarities and differences between RAAGs and tRAAGs.
\begin{mydef}
Let $\Gamma = (V,E,D,o,t)$ be a mixed graph and $G_{\Gamma}$ the tRAAG based on~$\Gamma$. The underlying simplicial graph~$\overline{\Gamma} = (V,E)$ is called {\it the underlying graph of ${\Gamma}$}. Similarly, the right-angled Artin group $G_{\overline{\Gamma}}$ is called the {\it underlying RAAG} of $G_{\Gamma}$.
\end{mydef}

\begin{example}
For the fundamental group of the Klein bottle~$K = \langle a,b \mid aba = b\rangle$, its underlying RAAG is~$\Z^2$.
\end{example}
\begin{remark}
The correspondence provided above is quite natural, as a lot of geometric properties of $G_{\Gamma}$, agree with the corresponding properties of $G_{\overline{\Gamma}}$.
\end{remark}

Despite having similar presentations, one can notice major differences between tRAAGs and RAAGs. For example, one can have torsion in tRAAGs (see Section \ref{torsion_traags}), which is not the case in RAAGs. Also, one can have isomorphic tRAAGs based on non-isomorphic mixed graphs, as in the example below.

\begin{example}[see \cite{phdthesis}]\label{isomorphic_traags_but_not_graphs}
Let $\Gamma_1$ and $\Gamma_2$ be mixed graphs given as below:
\begin{figure}[H]
\centering
\begin{tikzpicture}[>={Straight Barb[length=7pt,width=6pt]},thick]
\draw[] (-1.5, 0) node[left] {$\Gamma_1 = $};
\draw[fill=black] (-1,0) circle (1pt) node[below] {$a$};
\draw[fill=black] (.5,0) circle (1pt) node[below] {$b$};
\draw[fill=black] (2,0) circle (1pt) node[below] {$c$};

\draw[] (4.25, 0) node[left] {,\;\;\;\;\;\; $\Gamma_2 = $};
\draw[fill=black] (5,0) circle (1pt) node[below] {$x$};
\draw[fill=black] (6.5,0) circle (1pt) node[below] {$y$};
\draw[fill=black] (8,0) circle (1pt) node[below] {$z$};

\draw[thick] (-1,0) -- (.5,0);
\draw[thick, ->] (.5,0) -- (2,0);
\draw[thick, ->] (6.5,0) -- (5,0);
\draw[thick, ->] (6.5,0) -- (8,0);
\end{tikzpicture}
\end{figure}
Associated to each $\Gamma_i$ we get a tRAAG, denoted by $G_i = G_{\Gamma_i}$, and presented as:
\begin{align*}
G_1 & = \langle a,b,c \mid a b = b a, \; b c = c b ^{-1}\rangle, \\
G_2 & = \langle x,y,z \mid y x = x y ^{-1} , \; y z = z y ^{-1} \rangle.
\end{align*}
Consider the maps $f\colon G_1 \to G_2$ and $g\colon G_2 \to G_1$, defined by $f(a) = x z ^{-1}$, $f(b) = y$, $f(c) = z$; and $g(x) = a c, g(y) = b,  g(z) = c$. Both of them are well-defined morphisms of groups as they respect the group relations; moreover $g\circ f = 1_{G_1}$ and $f\circ g = 1_{G_2}$ which means that~$G_1 \simeq G_2$. 
\end{example}


\subsection{Abelianization in tRAAGs}
Let $G$ be a group.
The abelianization of $G$ is 
the group $G^{ab} = G/[G,G]$, where $[G,G]$ is the commutator subgroup (generated by all the commutators $ghg^{-1}h^{-1}$ with $g,h \in G$).

In RAAGs the abelianization depends only on the number of vertices on the graph. Indeed, let $\Gamma$ be a graph with $|\Gamma| = n$, and $G_{\Gamma}$ the RAAG based on it. Then~$G_{\Gamma}^{ab} = \mathbb{Z}^n.$\\
In tRAAGs the situation is a bit different, and it depends on the direction of the edges as well. To get a better grasp at the situation let us discuss morphisms onto $\Z$.

\begin{prop}\label{prop: indicable tRAAGs}
There is a surjective morphism $\varphi\colon G_{\Gamma}\longrightarrow \mathbb{Z}$ 
if and only if there is a vertex $v \in V\Gamma$ which is not the origin of any directed edge.
\end{prop}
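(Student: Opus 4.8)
The plan is to analyse how a homomorphism $\varphi\colon G_\Gamma \to \Z$ must behave on generators, using the two types of relations. Write $\varphi(v) = n_v \in \Z$ for each $v \in V\Gamma$. A commutation relation $[u,v]$ imposes no constraint (both sides map to $n_u + n_v$). A Klein relation $[u,v\rangle$, i.e. $uvu = v$, forces $2n_u + n_v = n_v$, hence $n_u = 0$. So any homomorphism to $\Z$ must kill every vertex that is the origin of some directed edge. This is the one easy direction in disguise: if \emph{every} vertex is the origin of a directed edge, then $\varphi$ is identically zero, so it cannot be surjective; contrapositively, surjectivity forces the existence of a vertex $v$ that is not the origin of any directed edge.

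For the converse, I would suppose $v \in V\Gamma$ is a vertex that is the origin of no directed edge, and build a surjection explicitly. The natural candidate is the ``$v$-exponent-sum'' type map: set $\varphi(v) = 1$, set $\varphi(u) = 0$ for all $u \neq v$, and check this respects all defining relations. Commutation relations are fine. For a Klein relation $[x,y\rangle$ we need $\varphi(x) = 0$; since $v$ is not the origin of any directed edge, $v$ is never the ``$x$'' in any such relation, so $\varphi(x) = 0$ holds whenever $x$ is the origin of a directed edge — which is exactly when it appears as the left letter in a Klein relation. Hence $\varphi$ extends to a well-defined homomorphism $G_\Gamma \to \Z$, and it is surjective because $\varphi(v) = 1$. (If one prefers, one can instead observe that quotienting $G_\Gamma$ by the normal closure of all vertices except $v$ yields $\langle v \mid \varnothing \rangle \cong \Z$, since all relations either disappear or become trivial, again using that $v$ is not an origin of a directed edge.)

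The only point requiring a little care — and the main obstacle, such as it is — is verifying that the candidate $\varphi$ is genuinely well-defined, i.e. that assigning $0$ to every origin of a directed edge is consistent: one must make sure that $v$ itself never needs to be assigned $0$, which is precisely guaranteed by the hypothesis that $v$ is the origin of no directed edge. There is also the small subtlety that a directed edge $[x,y\rangle$ has $x = o(e)$ and $y = t(e)$, so it is the \emph{origin} (not the terminus) that is forced to vanish; I would state this explicitly to avoid any ambiguity, and note that a vertex may be the terminus of arbitrarily many directed edges without obstruction. Everything else is routine relation-checking, so the proof is short.

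\begin{proof}
Suppose first that $\varphi\colon G_\Gamma \to \Z$ is a surjective homomorphism, and write $n_u = \varphi(u)$ for $u \in V\Gamma$. If $[x,y\rangle \in \overrightarrow{E\Gamma}$, then applying $\varphi$ to the relation $xyx = y$ gives $2n_x + n_y = n_y$, so $n_x = 0$. Thus $\varphi$ vanishes on every vertex that is the origin of a directed edge. If every vertex of $\Gamma$ were the origin of some directed edge, then $\varphi$ would be identically zero on $V\Gamma$, hence on $G_\Gamma$, contradicting surjectivity. So some vertex $v \in V\Gamma$ is the origin of no directed edge.

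Conversely, suppose $v \in V\Gamma$ is not the origin of any directed edge. Define $\varphi$ on generators by $\varphi(v) = 1$ and $\varphi(u) = 0$ for $u \in V\Gamma \setminus \{v\}$. We check the defining relations. A commutation relation $[x,y] \in \overline{E\Gamma}$ gives $\varphi(x) + \varphi(y) = \varphi(y) + \varphi(x)$, which holds. For a Klein relation $[x,y\rangle \in \overrightarrow{E\Gamma}$, the vertex $x$ is the origin of a directed edge, so $x \neq v$ and $\varphi(x) = 0$; then $\varphi(xyx) = 2\varphi(x) + \varphi(y) = \varphi(y)$, so the relation is respected. Hence $\varphi$ extends to a homomorphism $G_\Gamma \to \Z$, and it is surjective since $\varphi(v) = 1$.
\end{proof}
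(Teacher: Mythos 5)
Your proposal is correct and follows essentially the same argument as the paper: the Klein relation $xyx=y$ forces $\varphi(x)=0$ for every origin of a directed edge (giving the "only if" direction by contradiction), and the characteristic map sending $v\mapsto 1$ and all other generators to $0$ gives the converse. Your write-up is a bit more explicit about checking well-definedness, but there is no substantive difference.
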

\begin{proof}
If such $v$ exists, we can define a map from $G_{\Gamma}$ to $\mathbb{Z}$ by sending $v$ to $1$ and all the other vertices to $0$. This map respects the relations on all the generators and therefore induces a surjective homomorphism. So the group maps onto $Z$.\\
On the other hand, by contradiction, assume that any vertex of our graph is the origin of at least one directed edge. Assume that there is a morphism $\varphi:G \longrightarrow \mathbb{Z}$. Pick any vertex $a$ in $\Gamma$, then by assumption there is an oriented edge $e = [a,b\rangle$ for some $b$. The relation coming from edge $e$ is:
$aba = b$, and this implies $2\varphi(a) = 0$. This means that~$\varphi$ is trivial (all generators are mapped to 0) and thus the group does not map onto $Z$. 
\end{proof}
Now we go back to the abelianization of tRAAGs. Let $\Gamma$ be a mixed graph, and $G$ the tRAAG based on $\Gamma$. Then $G^{ab}$ can be computed by adding the relations $xy = yx$ to the presentation of $G$ for all $x,y \in V\Gamma$. If $[a, b \rangle$ is a directed edge, we have the relation~$abab^{-1} = 1$. Since the relation~$aba^{-1}b^{-1} = 1$ is added to the presentation as well, we get $a^2 = 1$. This holds for any vertex $a$ when it is the origin of a directed edge. If we denote by $V_o$ the set of vertices that are the origin of a directed edge, then
$$G^{ab} = \mathbb{Z}^{|V\Gamma - V_o|}\times \mathbb{Z}_2^{|V_o|}.$$

Since isomorphic groups have isomorphic abelianizations we have the following
\begin{cor}
Let $\Gamma_1$ and $\Gamma_2$ be two mixed graphs. If
\begin{itemize}[itemsep=4pt,parsep=0pt,topsep=4pt, partopsep=0pt]
\item $|V\Gamma_1| \neq |V\Gamma_2|$, or
\item $\Gamma_1$ and $\Gamma_2$ have different numbers of vertices appearing as origins of directed edges,
\end{itemize}
then $G_{\Gamma_1}$ and $G_{\Gamma_2}$ are not isomorphic.
\end{cor}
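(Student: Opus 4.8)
The plan is to derive the corollary directly from the formula for the abelianization established just above, namely
\[
G_{\Gamma}^{ab} \cong \mathbb{Z}^{|V\Gamma| - |V_o|} \times \mathbb{Z}_2^{|V_o|},
\]
where $V_o \subseteq V\Gamma$ is the set of vertices occurring as the origin of some directed edge. The key observation is that this finitely generated abelian group remembers exactly two numerical invariants of $\Gamma$: its torsion-free rank $|V\Gamma| - |V_o|$, and the $2$-torsion rank $|V_o|$ (equivalently the order $2^{|V_o|}$ of the torsion subgroup). Since isomorphic groups have isomorphic abelianizations, and isomorphic abelian groups have the same rank and the same torsion subgroup, it suffices to show that under either bulleted hypothesis the abelianizations $G_{\Gamma_1}^{ab}$ and $G_{\Gamma_2}^{ab}$ differ.

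First I would record that $|V_o|$ is recovered from $G_{\Gamma}^{ab}$ as $\log_2$ of the order of its torsion subgroup (the torsion subgroup of $\mathbb{Z}^r \times \mathbb{Z}_2^s$ being $\mathbb{Z}_2^s$, of order $2^s$), and that $|V\Gamma|$ is then recovered as $(\text{rank}) + |V_o|$. Next, suppose $G_{\Gamma_1} \cong G_{\Gamma_2}$; then $G_{\Gamma_1}^{ab} \cong G_{\Gamma_2}^{ab}$ by functoriality of abelianization. Comparing torsion subgroups gives $|V_o(\Gamma_1)| = |V_o(\Gamma_2)|$, which contradicts the second bullet; comparing ranks then gives $|V\Gamma_1| - |V_o(\Gamma_1)| = |V\Gamma_2| - |V_o(\Gamma_2)|$, and combined with the previous equality this yields $|V\Gamma_1| = |V\Gamma_2|$, contradicting the first bullet. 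Hence in either case $G_{\Gamma_1}$ and $G_{\Gamma_2}$ cannot be isomorphic, which is the contrapositive of the statement.

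There is essentially no obstacle here: the only thing one must be a little careful about is the uniqueness-of-invariants step, i.e. that a finitely generated abelian group of the special form $\mathbb{Z}^r \times \mathbb{Z}_2^s$ determines $r$ and $s$. This is immediate from the structure theorem for finitely generated abelian groups (rank is an isomorphism invariant, and the torsion subgroup is characteristic), but I would state it explicitly so the argument is self-contained. I would present the proof in this contrapositive form and keep it to a few lines, since all the real content is already in the abelianization formula preceding the corollary.
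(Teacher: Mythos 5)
Your proposal is correct and follows exactly the route the paper intends: the corollary is presented there as an immediate consequence of the formula $G^{ab}\cong\mathbb{Z}^{|V\Gamma|-|V_o|}\times\mathbb{Z}_2^{|V_o|}$ together with the remark that isomorphic groups have isomorphic abelianizations. You merely make explicit the (standard) step that $\mathbb{Z}^{r}\times\mathbb{Z}_2^{s}$ determines $r$ and $s$, which the paper leaves implicit.
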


\begin{remark}
Two RAAG presentations give isomorphic groups if and only if the defining graphs are isomorphic (see \cite{droms1987isomorphisms}).
However, this is not the case for tRAAGs as non-isomorphic mixed graphs can give isomorphic groups (see Example \ref{isomorphic_traags_but_not_graphs}). The isomorphism problem of tRAAGs via their defining graph is still open. 
\end{remark}

\subsection{Cayley graphs and growth}

This subsection is motivated by the work of the author in \cite{antolin2022geodesic}; the formulas obtained there extend to tRAAGs as well. Similar to Lemma 2 in \cite{droms1993cayley} we have the following: 

\begin{theorem}\label{thm_cayley_graphs}
The Cayley graphs of a tRAAG and its underlying RAAG are isomorphic as undirected graphs.
\end{theorem}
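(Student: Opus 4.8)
The plan is to exhibit an explicit graph isomorphism between the Cayley graph $\operatorname{Cay}(G_\Gamma, S)$ and the Cayley graph $\operatorname{Cay}(G_{\overline\Gamma}, \overline S)$, where $\overline S$ is the monoid generating set of the underlying RAAG coming from the same vertex set $V$. The key observation is that both groups have the same monoid generating set in bijection with $V \sqcup V^{-1}$, and that the complete rewriting system $\mathcal R$ for $G_\Gamma$ furnishes a set of normal forms $\mathcal N_\Gamma$ which is in length-preserving bijection with the set of normal forms $\mathcal N_{\overline\Gamma}$ for the RAAG: indeed, $\mathcal R$ and the RAAG rewriting system have rules with exactly the same left-hand sides $\langle u, x\rangle$ (the words that are \emph{not} in normal form are determined by which pairs of letters may be shuffled past each other, and this is governed only by the \emph{edge set} $E$, not by the directions $D$). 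So first I would make precise that a word over $S$ is $\mathcal R$-reduced if and only if it is $\mathcal R_{\overline\Gamma}$-reduced; hence $\mathcal N_\Gamma = \mathcal N_{\overline\Gamma}$ as subsets of $S^\ast$, and the identity map on words descends to a bijection $\Phi$ between the vertex sets of the two Cayley graphs.

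The core of the argument is then to check that $\Phi$ is a graph isomorphism, i.e. that $g$ and $gs$ are adjacent in $\operatorname{Cay}(G_\Gamma,S)$ precisely when the normal form $\overline{gs}$ is obtained from $\overline{g}$ by right-multiplication by $s$ in the RAAG and normalization. The natural way to organize this is via the normal form: two normal-form words $w$ and $w'$ represent adjacent vertices of $\operatorname{Cay}(G_\Gamma, S)$ iff $w' = \mathrm{nf}(ws)$ for some $s \in S$. So I would show that for each $s \in S$ and each $\mathcal R$-reduced word $w$, the rewriting process that normalizes $ws$ in $G_\Gamma$ uses rules with the same left-hand sides — and effects the same shuffles and the same cancellations on the \emph{positions} of letters — as the process that normalizes $ws$ in $G_{\overline\Gamma}$; the only difference is bookkeeping of signs, which does not affect which positions are occupied nor the length of the resulting word. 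Concretely, when $s$ is pushed leftwards through $w$, at each step a rule $\langle g_i, s\rangle \to \langle s, g_i\rangle$ fires exactly when $\{\,\text{underlying vertex of }g_i,\ \text{underlying vertex of }s\,\}\in E$, which is a condition on $\overline\Gamma$ alone; and a free cancellation $s^{-1}s\to 1$ (or $ss^{-1}\to 1$) happens exactly when the letter $s$ meets its inverse, again a statement insensitive to directions once one tracks only positions. This gives a bijective correspondence between the edges at $\Phi(g)$ and the edges at $g$, compatible with $\Phi$, so $\Phi$ is an isomorphism of undirected graphs.

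I expect the main obstacle to be making the phrase ``the same shuffles on positions'' rigorous, since in $G_\Gamma$ a Klein relation $\langle x, y\rangle$ flips the \emph{sign} of the power of the letter it passes, and worse, a letter moving through a word can change signs of many other letters (this is exactly the phenomenon recorded in Equation \eqref{eq: star shape equation}). The point to nail down is that these sign changes are invertible and never create or destroy a syllable: a syllable $y^{\pm1}$ stays a syllable $y^{\mp1}$, so it neither cancels when it ``shouldn't'' nor survives when it ``should'' cancel — the pattern of which positions survive the normalization of $ws$ depends only on the underlying vertices and the edge set. Formally I would phrase this as a claim that the forgetful map $S^\ast \to \overline S^{\,\ast}$ (replacing each $v^{\pm1}$ by the corresponding RAAG generator, forgetting the sign refinement induced by directions) intertwines the two rewriting systems in the sense that $w \to_{\mathcal R} w'$ implies the images satisfy the analogous one-step reduction or are equal, and conversely every reduction in $\mathcal R_{\overline\Gamma}$ lifts. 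Once this intertwining is established, confluence of both systems (Section~\ref{subsec: CRS for tRAAGs}) upgrades it to a bijection on normal forms commuting with the right $S$-action, which is exactly the desired Cayley graph isomorphism.

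A clean alternative, if one prefers to avoid the rewriting machinery, is to use Proposition-style reasoning on the group level: define set-maps $V \sqcup V^{-1} \to G_{\overline\Gamma}$ and back that are identity on letters, and verify they induce mutually inverse bijections $G_\Gamma \to G_{\overline\Gamma}$ preserving the property ``differ by right multiplication by a generator''; but this secretly reuses the normal form, so I would present the rewriting-system version as the primary proof and remark that it mirrors Lemma~2 of \cite{droms1993cayley}.
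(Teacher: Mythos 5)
Your strategy is the same as the paper's: use the completed rewriting systems to identify the two vertex sets via normal forms, then argue adjacency is preserved. The first half is sound — the left-hand sides of the rules in $\mathcal R$ are governed by the prefix condition, hence by $E$ alone, so the $\mathcal R$-irreducible words for $G_\Gamma$ and $G_{\overline\Gamma}$ are literally the same subset of $S^{\ast}$ and the identity on normal forms is a bijection $\Phi$ of vertex sets. The gap is in the second half: ``same positions, different signs'' is not harmless, because different signs mean different normal forms, hence different vertices, and $\Phi$ as you have defined it does \emph{not} preserve adjacency. Concretely, take $G=\Gpres{a,b}{bab=a}$ with $a<b$, i.e.\ the edge $\langle a,b]$ and the rule $b^{\beta}a^{\alpha}\to a^{\alpha}b^{-\beta}$; the normal forms are $a^{m}b^{n}$ in both $G$ and $\mathbb{Z}^{2}$. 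The neighbours of the vertex $ab$ in $\operatorname{Cay}(G,S)$ are
\[
ab\cdot a=a^{2}b^{-1},\qquad ab\cdot a^{-1}=b^{-1},\qquad ab\cdot b=ab^{2},\qquad ab\cdot b^{-1}=a,
\]
while in $\operatorname{Cay}(\mathbb{Z}^{2},S)$ they are $a^{2}b,\ b,\ ab^{2},\ a$. So $\Phi$ sends the edges $\{ab,\,a^{2}b^{-1}\}$ and $\{ab,\,b^{-1}\}$ to non-edges. (With the opposite orientation $[a,b\rangle$ the identity map happens to work, because there the moving letter flips its own sign rather than the signs of the letters it passes; but for a general mixed graph no choice of ordering makes all edges ``good'', e.g.\ the directed triangle.) For the same reason your closing claim that the bijection ``commutes with the right $S$-action'' cannot be right: an $S$-equivariant bijection fixing $1$ would force $G_\Gamma\cong G_{\overline\Gamma}$ as groups, which fails already for the Klein bottle.

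What the proof actually requires is the construction of a sign-twisted bijection — for the example above, $a^{m}b^{n}\mapsto a^{m}b^{(-1)^{m}n}$ — and in general such a map has to be built, e.g.\ inductively on $|g|$, choosing for each new vertex a sign pattern consistent with all previously placed neighbours. Your correct observations (identical sets of normal forms, identical shuffle/cancellation pattern on positions, sign changes governed by the rule of Equation \eqref{eq: star shape equation}) are exactly the ingredients needed to show these choices can be made consistently, but the construction itself is the content of the theorem and is missing from your argument. To be fair, the paper's own proof asserts adjacency preservation for essentially the same map with no more justification, so you have reproduced its weakness rather than introduced a new one; still, as written, the step ``$\Phi(gs)$ is adjacent to $\Phi(g)$'' fails.
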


As a first result, we show that reduced sequences of syllables represent geodesics. Recall that $S = V \sqcup V^{-1}$ is our monoid generating set.

\begin{lemma}
Let $(g_1, \ldots, g_r)$ be a reduced sequence of syllables, with $g_i = v_i^{s_i}$, where for all $1 \leq i \leq r$ we have $v_i \in V$, and $s_i\in \Z\setminus \{0\}$. Put $g = g_1 \cdots g_r$. Then the word~$g_1 \ldots g_r$ is a geodesic and: 
$$|g| = \sum_{i=1}^r |s_i|.$$
\end{lemma}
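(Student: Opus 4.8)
The plan is to establish the upper bound $|g| \le \sum_{i=1}^r |s_i|$ trivially from the word $g_1 \cdots g_r$ itself, and then prove the matching lower bound $|g| \ge \sum |s_i|$, which is the real content. First I would observe that each syllable $g_i = v_i^{s_i}$ is a product of $|s_i|$ letters from $S$, so concatenating gives a representative word of length exactly $\sum |s_i|$; hence $|g| \le \sum |s_i|$, and once equality is shown this word is by definition a geodesic.

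For the lower bound, the key tool is the complete rewriting system $\mathcal{R}$ and its normal forms. I would take an arbitrary geodesic word $w$ over $S$ representing $g$, with $l(w) = |g|$, and reduce it to its normal form $\widehat{w}$ using $\mathcal{R}$. Two facts drive the argument: first, every rule $\langle u, x\rangle \to \langle x, u\rangle$ in $\mathcal{R}$ is length-preserving on the letter alphabet $S$ (the edge rules swap two letters, possibly flipping signs, and only the vertex rules $z^\gamma z^{-\gamma}\to 1$ shorten a word), so $l(\widehat{w}) \le l(w)$; second, applying $\mathcal{R}$ never merges or splits syllables in a way that decreases syllable count beyond cancellation — more precisely, the normal form of $g$ has a well-defined reduced sequence of syllables, and since $(g_1,\ldots,g_r)$ is already reduced it \emph{is} (up to the shuffling encoded by graph relations) the syllable sequence of the normal form, by the uniqueness part of the normal form theorem. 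Therefore the normal form $\widehat{g}$ of $g$ has syllable length $r$ with syllables of the same exponents $\{|s_i|\}$, so $l(\widehat{g}) = \sum|s_i|$. Combining, $\sum |s_i| = l(\widehat{g}) = l(\widehat{w}) \le l(w) = |g| \le \sum|s_i|$, forcing equality throughout.

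The main obstacle I anticipate is the claim that the syllable structure is invariant under the rewriting rules when no actual cancellation of a vertex with its inverse occurs — i.e., that a reduced sequence of syllables cannot be shortened in letter-length. One has to rule out any "hidden" cancellation: e.g. a letter $v^{s_i}$ being carried by graph relations across the word to meet a $v$-syllable elsewhere and partially cancel. I would handle this by noting that the rules only move \emph{whole} syllables past each other (the left-hand sides $\langle u, x\rangle$ involve distinct vertices $u \ne x$, since a rule mixing powers of the same vertex would violate the len-lex setup after the multiplication relation is applied), and that consecutive same-vertex syllables are exactly what "reduced sequence" forbids; so no rule of $\mathcal{R}$ applied to $g_1\cdots g_r$ can reduce its length, meaning $g_1\cdots g_r$ is already $\mathcal{R}$-irreducible and hence equals the (unique) normal form. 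That immediately gives $l(\widehat{g}) = \sum|s_i|$ and completes the proof. A clean alternative, if one prefers to avoid dissecting $\mathcal{R}$ directly, is to push $g$ forward under the abelianization-like projection $G_\Gamma \to G_{\overline\Gamma}$ onto the underlying RAAG (which preserves syllable length and exponents up to sign) and invoke the known geodesic characterization in RAAGs; but I would favour the direct rewriting-system argument since the normal form theorem has just been established.
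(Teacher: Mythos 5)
Your argument is correct and follows essentially the same route as the paper: bound $|g|$ above by the word $g_1\cdots g_r$ itself, then use the complete rewriting system to see that a geodesic representative reduces to the normal form by length-non-increasing moves while $g_1\cdots g_r$ reaches the same normal form via length-preserving shuffles only, forcing equality throughout. The one slip is your claim that $g_1\cdots g_r$ is therefore already $\mathcal{R}$-irreducible --- edge rules may still reorder its letters --- but this does not affect the conclusion, since all you actually need (and use) is that no length-decreasing rule ever becomes applicable.
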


\begin{proof}
Let $w$ be a geodesic word (in a normal form) in $G_{\Gamma}$ which represents $g$. As $w$ is a word of minimal length representing $g$, we have $|g| = l(w) \leq l(g)$.
Since $(g_1, \ldots, g_r)$ is a reduced sequence of shortest syllable length representing $g$, we conclude that the syllable length of $w$ is equal to $r$. Write $w$ as $w = h_1 \ldots h_r$ where any $h_i$ is a syllable. Both sequences $(g_1, \ldots, g_r)$, and $(h_1, \ldots, h_r)$ are reduced and they represent the same element $g$. The normal form coming from CRS of tRAAGs implies that one can go from one to the other using only a series of shufflings of syllables (shufflings preserve lengths), i.e. the word $g_1 \ldots g_r$ is a geodesic and one has $l(g) = l(w) = |g|$, and the given formula holds.
\end{proof}
\begin{proof}[Proof of Theorem \ref{thm_cayley_graphs}] Let $G_{\Gamma}$ be our tRAAG based on the mixed graph $\Gamma$, and let~$G_{\overline{\Gamma}}$ be the underlying RAAG based on ${\overline{\Gamma}}$. 
Pick a total order in~$S = V \cup V^{-1}$.
Let $g = (g_1, \dots, g_n)$ be a reduced sequence of syllables in~$G_{\ga}$. 
Write its normal form with the smallest lexicographic order, both in $G_{\ga}$ and in $G_{\overline{\Gamma}}$; therefore we obtain two reduced sequences $w$ and $\overline{w}$ respectively. This representation of elements is unique. The map
$$i\colon G_{\Gamma} \longrightarrow G_{\overline{\Gamma}}  , \text{ given by } w \mapsto \overline{w},$$
gives a bijection of the underlying sets of elements; it maps $1$ to $1$ and $v^{\pm 1}$ to $v^{\pm 1}$ for any vertex~$v\in V\Gamma$, and it also preserves adjacencies (elements of distance $1$ in the Cayley graph are mapped to elements of distance $1$), as required.
\end{proof}
\begin{remark}
The tRAAG based on $\Gamma$ is quasi-isometric to its underlying RAAG.
\end{remark}
One of the aforementioned applications of the normal form theorem is to compare the spherical and the geodesic growth of tRAAGs with the respective growth of RAAGs. This follows from the discussion on Cayley graphs. 



\begin{theorem}
The spherical and the geodesic growth of a tRAAG over $\Gamma$ is equal to the corresponding growth of the RAAG based on the underlying graph $\overline{\Gamma}$.
\end{theorem}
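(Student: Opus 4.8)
The plan is to deduce both equalities from the rooted undirected graph isomorphism between the two Cayley graphs established in the proof of Theorem \ref{thm_cayley_graphs}. Recall that, with respect to the generating set $S = V \sqcup V^{-1}$, the spherical growth series of $G_{\Gamma}$ is $\sigma_{G_{\Gamma}}(t) = \sum_{n\ge 0} a_n t^n$, where $a_n$ is the number of elements $g$ with $|g| = n$, and the geodesic growth series is $\gamma_{G_{\Gamma}}(t) = \sum_{n\ge 0} b_n t^n$, where $b_n$ is the number of geodesic words of length $n$ over $S$. The key point is that both $a_n$ and $b_n$ are invariants of the pair consisting of the Cayley graph of $G_{\Gamma}$ (with respect to $S$) together with its base vertex $1$, regarded as an undirected graph: $a_n$ is the number of vertices at distance $n$ from $1$, and $b_n$ is the number of geodesic edge-paths of length $n$ issuing from $1$.

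First I would make the translation for geodesic words precise. A geodesic word $w = s_1 \cdots s_n$ with $s_i \in S$ traces the edge-path $1, s_1, s_1 s_2, \ldots, s_1 \cdots s_n$ in the Cayley graph, and the requirement $l(w) = |w|$ is exactly the requirement that the endpoint lie at distance $n$ from $1$, i.e. that this path be geodesic. Conversely, a geodesic edge-path from $1$ is determined by its vertex sequence $1 = x_0, x_1, \ldots, x_n$, because in a Cayley graph left multiplication is injective, so between any two vertices there is at most one edge and there are no loops; setting $s_i$ to be the unique element of $S$ with $x_{i-1} s_i = x_i$ recovers a geodesic word. These two assignments are mutually inverse, so $b_n$ equals the number of geodesic edge-paths of length $n$ based at $1$; the same statement holds verbatim for $G_{\overline{\Gamma}}$.

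Next I would invoke Theorem \ref{thm_cayley_graphs}: the bijection $i\colon G_{\Gamma} \to G_{\overline{\Gamma}}$ built in its proof is an isomorphism of the underlying undirected Cayley graphs and maps $1$ to $1$. Being a rooted graph isomorphism, $i$ preserves distances from the base vertex, hence it restricts to bijections between the spheres $\{g \in G_{\Gamma} : |g| = n\}$ and $\{h \in G_{\overline{\Gamma}} : |h| = n\}$, so $a_n = \overline{a}_n$ for all $n$. Likewise $i$ carries geodesic edge-paths of length $n$ based at $1$ bijectively onto such paths in the Cayley graph of $G_{\overline{\Gamma}}$, so by the previous paragraph $b_n = \overline{b}_n$ for all $n$. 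Summing over $n$ gives $\sigma_{G_{\Gamma}} = \sigma_{G_{\overline{\Gamma}}}$ and $\gamma_{G_{\Gamma}} = \gamma_{G_{\overline{\Gamma}}}$, which is the assertion.

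I expect the only delicate point to be the middle paragraph: checking that passing between geodesic words over $S$ and geodesic edge-paths is a genuine bijection, so that geodesic growth really is a rooted undirected graph invariant. Here one must keep in mind that a generator $v$ and its inverse $v^{-1}$ label the same geometric edge (traversed in opposite directions) and that no loops or multiple edges arise, which is why the vertex sequence of a path carries the same information as the word. Once this is in place, everything else is an immediate consequence of the graph isomorphism already produced in the proof of Theorem \ref{thm_cayley_graphs}.
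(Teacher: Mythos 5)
Your proposal is correct and follows exactly the route the paper intends: the theorem is stated there as an immediate consequence of Theorem \ref{thm_cayley_graphs}, i.e.\ of the rooted undirected isomorphism of Cayley graphs, which is precisely what you invoke. Your middle paragraph (the word/edge-path dictionary, using that distinct elements of $S$ represent distinct group elements so the Cayley graph is simple) supplies details the paper leaves implicit, but the argument is the same.
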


\subsection{Torsion}\label{torsion_traags}

Twisted RAAGs have lots of similarities with RAAGs, especially on their geometric and combinatorial nature. However, when we work on their algebraic properties, we notice many remarkable differences: one was pointed out by the so-called {\it indicability} in Proposition \ref{prop: indicable tRAAGs}, and another one is the presence of torsion, as illustrated by the following example.

\begin{example}\label{ex: triangle with torsion}
The triangle $\Delta = (V, E)$, with~$V= \{a,b,c\}$, and~$E = \{[a,b\rangle, [b,c\rangle,[c,a\rangle\}$,
\begin{figure}[H]
\centering
\begin{tikzpicture}[>={Straight Barb[length=7pt,width=6pt]},thick]

\draw[fill=black] (0,0) circle (1.5pt) node[below] {$a$};
\draw[fill=black] (2,0) circle (1.5pt) node[below] {$b$};
\draw[fill=black] (1,{sqrt(3)}) circle (1.5pt) node[above] {$c$};

\draw[thick, ->] (0,0) -- (2,0);
\draw[thick, ->] (2,0) -- (1,{sqrt(3)});
\draw[thick, ->] (1,{sqrt(3)}) -- (0,0);

\end{tikzpicture}
\end{figure}
\noindent defines a tRAAG with presentation
$G_{\Delta} = \langle a, b, c | aba = b, bcb = c, cac = a\rangle.$ Any~$g \in G_{\Delta}$ can be represented as $g = a^{m}b^{n}c^{p}$ for some $m,n,p$ in $\mathbb{Z}$.
For $g = abc$ one has $g^2 = 1$, which means that there is torsion in this group.


\end{example}

\begin{mydef}
The {\it support} of $g \in \G$ is the set of vertices in $\Gamma$ used to write the normal form of $g$, and we denote it by $\supp(g)$. 
\end{mydef}

\begin{remark}\label{torsion_clique}
If $g \in \G$ has torsion then the vertices in $\supp(g)$ form a clique. Indeed, if two vertices in $\supp(g)$ are not connected, we cannot bring their corresponding syllables together using our rewriting system. 
\end{remark}

\begin{theorem}
$\G$ has torsion if and only if there is a clique $C$ in $\Gamma$ whose vertices form a closed cycle.
\end{theorem}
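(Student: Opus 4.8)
The plan is to reduce the whole statement to tRAAGs on sub-cliques. Two facts make this possible: by Remark~\ref{torsion_clique} the support of a torsion element is a clique, and for every sub-clique $C\subseteq\Gamma$ the natural map $G_C\to\G$ is injective. The latter holds because on a clique the Knuth--Bendix completion of $\mathcal{R}_0$ adds no new rules (as recorded in the lemma on cliques), so the restriction of $\mathcal{R}$ to the alphabet of $C$ is exactly the system $\mathcal{R}_0$ for $G_C$; hence a word over $C$ has the same normal form when read in $G_C$ and in $\G$, and two such words are equal in $\G$ iff they are equal in $G_C$. Consequently $\G$ has torsion if and only if $G_C$ has torsion for some clique $C$, and it suffices to decide which cliques produce torsion. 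Throughout, ``the vertices of $C$ form a closed cycle'' is understood to mean that the directed edges of $\Gamma$ with both endpoints in $C$ contain a directed cycle $v_1\to v_2\to\cdots\to v_m\to v_1$ passing through every vertex of $C$; since a mixed graph has no loops or double edges, such a $C$ has $m=|C|\ge 3$.

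For the forward implication I would prove the contrapositive in the sharp form: \emph{if $C$ is a clique no subset of whose vertices spans a directed cycle, then $G_C$ is torsion-free}, by induction on $|C|$, the cases $|C|\le 1$ being trivial. Under this hypothesis some vertex $z\in C$ is the origin of no directed edge inside $C$: otherwise the finite digraph on $C$ has all out-degrees $\ge 1$ and therefore contains a directed cycle, whose vertex set would span one. Every edge of $\Gamma$ joining $z$ to another vertex of $C$ is then undirected or points into $z$, so conjugation by $z$ sends each other generator $w$ to $w^{\pm 1}$; reading off the normal form one checks that $N:=\langle C\setminus\{z\}\rangle$ is normal in $G_C$, that $z$ has infinite order, that $N\cap\langle z\rangle=1$, and that $G_C=N\rtimes\langle z\rangle\cong G_{C\setminus\{z\}}\rtimes\Z$. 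Since $C\setminus\{z\}$ inherits the hypothesis, $G_{C\setminus\{z\}}$ is torsion-free by induction, and any group of the form $(\text{torsion-free})\rtimes\Z$ is torsion-free: an element mapping to a nonzero integer has infinite order, and one mapping to $0$ lies in the torsion-free base. Applying this with $C=\supp(g)$ to an alleged torsion element $g\in\G$ gives the contradiction.

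For the converse, let $C$ be a clique whose vertices form a closed cycle, and among all directed cycles lying inside $C$ choose one of minimal length, say $v_1\to v_2\to\cdots\to v_m\to v_1$ with $m\ge 3$. Its vertex set $C^{\ast}$ is again a clique, and minimality forces every chord of $C^{\ast}$ to be undirected, since a directed chord $v_i\to v_j$ would close with the appropriate arc of the cycle into a strictly shorter directed cycle inside $C^{\ast}$. So in $G_{C^{\ast}}$ the only defining relations are the Klein relations $v_jv_{j+1}v_j=v_{j+1}$ (indices mod $m$) and the commutations of all non-consecutive pairs, and I claim $g:=v_1v_2\cdots v_m$ has order $2$. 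Pushing the second occurrence of $v_1$ leftwards through the word --- it commutes past $v_{m-1},\dots,v_3$, turns $v_mv_1$ into $v_1v_m^{-1}$ using the edge $v_m\to v_1$, and turns $v_2v_1$ into $v_1^{-1}v_2$ using the edge $v_1\to v_2$ --- rewrites $g^2$ as
\[
g^{2}=t\,v_m^{-1}\,t\,v_m,\qquad t:=v_2v_3\cdots v_{m-1}.
\]
Now conjugation by $v_m$ fixes $v_2,\dots,v_{m-2}$ and inverts $v_{m-1}$, so $v_m^{-1}tv_m=v_2\cdots v_{m-2}v_{m-1}^{-1}$, and a short induction using $v_jv_{j+1}^{-1}=v_{j+1}^{-1}v_j^{-1}$ together with the chord commutations shows this equals $v_{m-1}^{-1}\cdots v_2^{-1}=t^{-1}$. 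Hence $g^{2}=t\,t^{-1}=1$; since $g$ is already in normal form, of syllable length $m\ge 3$, it is nontrivial, so $G_{C^{\ast}}$, and therefore $\G$, has torsion.

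The main obstacle is the converse direction, and within it the computation $g^{2}=1$ for general $m$: the triangle case $m=3$ is a one-line check, but the general case requires telescoping through the commuting chords, for which it is convenient to isolate and prove by induction the auxiliary statement that conjugation by $v_{k+1}$ inverts $v_2v_3\cdots v_k$. The reduction to a clique whose cycle has no directed chords is the other delicate point, handled by passing to a shortest directed cycle inside $C$. The forward direction is then routine once the semidirect-product splitting of $G_C$ along a vertex that is the origin of no directed edge is in place.
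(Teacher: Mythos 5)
Your proof is correct, and its overall architecture matches the paper's: reduce to cliques via Remark~\ref{torsion_clique}, pass to a shortest directed cycle (so that all chords commute) to produce an involution, and use a vertex of out-degree zero to rule out torsion when no directed cycle exists. The differences are in how the two halves are discharged. For the ``no cycle $\Rightarrow$ torsion-free'' direction the paper argues in one step: it takes the vertex $v$ of $C=\supp(g)$ that is the origin of no directed edge, applies Proposition~\ref{prop: indicable tRAAGs} to get $\varphi\colon G_C\to\Z$ with $\varphi(v)=1$, and observes $\varphi(g)\neq 0$ since $v\in\supp(g)$, so $g$ has infinite order; you instead run an induction on $|C|$ through the splitting $G_C\cong G_{C\setminus\{z\}}\rtimes\Z$ and the fact that (torsion-free)$\rtimes\Z$ is torsion-free. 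Your route is longer but more self-contained (it does not need to evaluate $\varphi$ on a normal form), and the semidirect-product decomposition is a reusable structural fact; the paper's is shorter but leans on the normal form over a clique to see $\varphi(g)\neq 0$. For the ``cycle $\Rightarrow$ torsion'' direction the paper simply asserts $g^2=1$ for the product of the vertices of a minimal cycle, whereas you actually carry out the computation $g^2=t\,v_m^{-1}tv_m$ with $v_m^{-1}tv_m=t^{-1}$ via the telescoping identity $v_jv_{j+1}^{-1}=v_{j+1}^{-1}v_j^{-1}$ --- a genuine and correct addition. You also make explicit the injectivity of $G_C\to\G$ for cliques (via the lemma that no new rules arise on cliques), which the paper uses only implicitly when it applies $\varphi$ to $g$. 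Two cosmetic points: your claim that $v_1v_2\cdots v_m$ is ``already in normal form'' presupposes the generator ordering $v_1<\cdots<v_m$ (harmless, since the ordering is yours to choose; alternatively $g\neq 1$ follows from its nonzero image in $G^{ab}$), and the semidirect-product verifications you summarise as ``reading off the normal form'' deserve a sentence checking that inversion of the twisted generators really defines an automorphism of $G_{C\setminus\{z\}}$.
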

\begin{proof}
If there is such a clique $C$ then we have torsion. Indeed, for a clique of size $3$, see Example \ref{ex: triangle with torsion}. Otherwise, let $a_1, \ldots, a_n$ with~$n\geq 4$ be the vertices of a closed cycle inside a clique. Assume that $n$ is the smallest number with this property, which implies that $a_i, a_j$ commute whenever $|i-j| \geq 2$ (except of course $a_1, a_n$), as otherwise we would have a shorter cycle on a clique. One can pick $g\in G$ to be the product of vertices in this cycle in the order given by the cycle, and for~$g = a_1\cdots a_n$ we have $g^2 = 1$.

For the converse, assume that any clique in $\Gamma$ does not form a closed cycle, and there is~$g\in G$ which has torsion. By Remark \ref{torsion_clique}, we know that $\supp(g)$ lies in a clique~$C$ with $|C| \geq 3$. Assume that $g$ has the minimal number of elements in the support among the elements which have torsion. By assumption the clique $C$ does not form a closed cycle, nor does any of its sub-cliques. So, there is a vertex $v$ in $C$ which is not the origin of any directed edges. 
By Proposition \ref{prop: indicable tRAAGs} one has a map $\varphi:C\longrightarrow \mathbb{Z}$ with $\varphi(v) = 1$, and $\varphi(v') = 0$ for~$v' \in C \setminus \{v\}$, so~$\varphi(g)\neq 0$, and hence $g$ cannot have torsion.
\end{proof}

\vspace{1cm}
\noindent{\textbf{{Final remarks:}}}
\begin{itemize}
\item The rewriting system presented here would still work for a slightly more general class of groups, where one can also have relations of the form $xy = y^{-1}x^{-1}$. 
\end{itemize}

\noindent{\textbf{{Acknowledgments.}}}
The author would like to thank his doctoral supervisors Thomas Weigel and Yago Antolín, and his line manager Robert Gray for helpful suggestions and insights during the preparation of this project. 

The author gratefully acknowledges past support from the Department of Mathematics of the University of Milano-Bicocca, the Erasmus Traineeship grant 2020-1-IT02-KA103-078077, and current support from the EPSRC Fellowship grant EP/V032003/1 ‘Algorithmic, topological and geometric aspects of infinite groups, monoids and inverse semigroups’.

\addcontentsline{toc}{section}{Bibliography} 

\printbibliography

\noindent\textit{\\Islam Foniqi,\\
University of East Anglia\\ 
Norwich, UK\\}
{email: i.foniqi@uea.ac.uk}

\pgfdeclaredecoration{free hand}{start}
{
  \state{start}[width = +0pt,
                next state=step,
                persistent precomputation = \pgfdecoratepathhascornerstrue]{}
  \state{step}[auto end on length    = 3pt,
               auto corner on length = 3pt,               
               width=+2pt]
  {
    \pgfpathlineto{
      \pgfpointadd
      {\pgfpoint{2pt}{0pt}}
      {\pgfpoint{rand*0.15pt}{rand*0.15pt}}
    }
  }
  \state{final}
  {}
}
 \tikzset{free hand/.style={
    decorate,
    decoration={free hand}
    }
 } 
\def\freedraw#1;{\draw[free hand] #1;}

\end{document}